\newtheorem{thm}{Theorem}[section]
\newtheorem{prop}[thm]{Proposition}
\newtheorem{lem}[thm]{Lemma}
\newtheorem{Def}[thm]{Definition}
\newtheorem{rem}[thm]{Remark}
\newtheorem{ex}[thm]{Example}
\newcommand{\be}{\begin{equation}}
\newcommand{\ee}{\end{equation}}
\newcommand{\ben}{\begin{enumerate}}
\newcommand{\een}{\end{enumerate}}
\newcommand{\beq}{\begin{eqnarray}}
\newcommand{\eeq}{\end{eqnarray}}
\newcommand{\beqn}{\begin{eqnarray*}}
\newcommand{\eeqn}{\end{eqnarray*}}
\begin{document}
\newcommand{\f}{\frac}
\newtheorem{theorem}{Theorem}[section]
\newtheorem{lemma}[theorem]{Lemma}
\newtheorem{proposition}[theorem]{Proposition}
\newtheorem{corollary}[theorem]{Corollary}
\theoremstyle{definition}
\newtheorem{definition}[theorem]{Definition}
\newtheorem{example}[theorem]{Example}
\newtheorem{solution}[theorem]{Solution}
\newtheorem{xca}[theorem]{Exercise}
\theoremstyle{remark}
\newtheorem{remark}[theorem]{Remark}
\numberwithin{equation}{section}
\newcommand{\sta}{\stackrel}
\title{ On   $(\alpha, \beta, \gamma)$-metrics  }
\author { Nasrin Sadeghzadeh, Tahere Rajabi}
\maketitle
\bigskip
\begin{abstract}
In this paper,  a new class of Finsler metrics which are included $(\alpha, \beta)$-metrics
are introduced. They are defined by a Riemannian metric and two  1-forms $\beta=b_i(x)y^i$ and $\gamma=\gamma_i(x)y^i$. This class of metrics are a generalization of $(\alpha, \beta)$-metrics which are not always $(\alpha, \beta)$-metric. We find a necessary
and sufficient condition for this metric to be locally projectively flat and then
we prove the conditions for this metric to be of  Douglas type.\\
{\bf Keywords:} Finsler geometry,   $(\alpha, \beta, \gamma)$-metrics, Projectively flat, Douglas space.
\end{abstract}

\section{Introduction}
$(\alpha, \beta)$-metrics form a special class of Finsler metrics partly because they are
computable.  An $(\alpha, \beta)$-metric on a smooth manifold $M$ is defined by $F=\alpha\phi(s)$, $s=\frac{\beta}{\alpha}$   where  $\phi=\phi(s)$ is a $C^\infty$ scalar function on $(-b_0, b_0)$ with certain regularity, $\alpha =\sqrt{a_{ij}(x)y^iy^j}$ is a Riemannian metric and  $\beta = b_i(x)y^i$  is a 1-form on $M$.
\\
In \cite{RS}, we had studied   a new generalization  of  $(\alpha, \beta)$-metrics which is defined by  a Finsler metric $F$ and  a 1-form $\gamma= \gamma_iy^i$ on an $n$-dimensional manifold $M$. Then the metric is given by $\bar{F}=F\psi(\tilde{s})$, where $ \tilde{s}:=\frac{\gamma}{F}$,  $\|\gamma\|_F<g_0$ and $\psi(\tilde{s})$ is a positive $C^\infty$ function on $(-g_0,g_0)$.
 One could see these metrics are as a $\beta$-change of a Finsler metric.\\
Suppose that $F=\alpha\phi(s)$, $s=\frac{\beta}{\alpha}$  be an $(\alpha, \beta)$-metric. For any 1-form  $\gamma\neq\beta$, $\bar{F}=\alpha\phi(s)\psi(\tilde{s})$ is not necessarily an $(\alpha, \beta)$-metric.
Notice that if $F=\alpha+\beta$ be a Randers metric and $\bar{F}=F+\gamma$ be a Randers change of $F$, then $\bar{F}=\alpha+\beta+\gamma$ is a Randers metric.
 By this idea we decide to define a new generalization  of  $(\alpha, \beta)$-metrics which is in the form $\bar{F}=\alpha\Psi(s,\bar{s})$, where $\Psi(s,\bar{s})=\phi(s)\psi(\frac{\bar{s}}{\phi(s)})$, $\bar{s}=\frac{\gamma}{\alpha}$.
\\
In this paper, we intend to generalize the  above metric. We consider a new generalization  of  $(\alpha, \beta)$-metrics which is defined by  a Riemannian metric $\alpha =\sqrt{a_{ij}(x)y^iy^j}$ and  two 1-form $\beta= b_iy^i$  and $\gamma= \gamma_iy^i$   on an $n$-dimensional manifold $M$. Then the metric is given by $F=\alpha\Psi(s,\bar{s})$, where $s=\frac{\beta}{\alpha}$, $\bar{s}=\frac{\gamma}{\alpha}$,  $\|\beta\|_{\alpha}<b_0$, $\|\gamma\|_{\alpha}<g_0$  and $\Psi(s,\bar{s})$ is a positive $C^\infty$ function on $(-b_0,b_0)\times(-g_0,g_0)$, is a Finsler metric which we call it  $(\alpha, \beta, \gamma)$-metric.
\\
Another  generalization  of  $(\alpha, \beta)$-metrics  are called general $(\alpha, \beta)$-metrics which first introduced by C. Yu and H. Zhu in \cite{YZ}. By definition, a general $(\alpha, \beta)$-metric $F$ can be expressed in the following form
\[
F = \alpha\phi(b^2,s),
\]
where $ b :=\|\beta\|_{\alpha}$. In the future, we can similarly define general $(\alpha, \beta, \gamma)$-metric that it is given by
\[
F = \alpha\phi(b^2,g^2,s,\bar{s}),
\]
where $ b :=\|\beta\|_{\alpha}$ and $ g:=\|\gamma\|_{\alpha}$.

\section{ Preliminaries }
Let $M$ be a smooth manifold and $TM:=\bigcup_{x\in M} T_xM$ be the tangent bundle of $M$, where $T_xM$ is the tangent space at $x\in M$. A Finsler metric on $M$ is a function $F:TM\longrightarrow [0,+\infty)$ with the following properties
\begin{enumerate}
\item[-] $F$ is $C^\infty$ on $TM\backslash \{0\}$;
\item[-] $F$ is positively 1-homogeneous on the fibers of tangent bundle $TM$;
\item[-] for each  $x\in M$, the following quadratic form $\textbf{g}_y$ on $T_xM$ is positive definite,
\[
\textbf{g}_y(u,v):=\frac{1}{2}\frac{\partial^2}{\partial s\partial t}\big[ F^2(y+su+tv)    \big]|_{t,s=0},    \quad u,v\in T_xM.
\]
\end{enumerate}
Let $x\in M$ and $F_x:= F|_{T_xM}$. To measure the non-Euclidean feature of $F_x$, define $\textbf{C}_y : T_xM\otimes T_xM\otimes T_xM\rightarrow \mathbb{R}$  by
\[
\textbf{C}_y(u,v,w):=\frac{1}{2}\frac{d}{dt}\big[ \textbf{g}_{y+tw}(u,v)    \big]|_{t=0},    \quad u,v,w\in T_xM.
\]
The family $\textbf{C}:=\{\textbf{C}_y\}_{y\in TM_0}$ is called the Cartan torsion. It is well known that
$\textbf{C}=0$ if and only if $F$ is Riemannian.\\
Given a Finsler manifold $(M, F)$, then a global vector field $\textbf{G}$ is induced by $F$ on $TM_0$, which
in a standard coordinate $(x^i, y^i)$ for $TM_0$ is given by
\[
\textbf{G}=y^i\frac{\partial}{\partial x^i}-2G^i(x,y)\frac{\partial}{\partial y^i}.
\]
where $G^i(x, y)$ are local functions on $TM_0$  given by
\be\label{0G^i}
G^i=\frac{1}{4}g^{il}\Big\{     \frac{\partial g_{jl}}{\partial x^k}+\frac{\partial g_{lk}}{\partial x^j}-\frac{\partial g_{jk}}{\partial x^l}     \Big\}y^jy^k.
\ee
$\textbf{G}$ is called the associated spray to $(M, F)$. The projection of an integral curve of the spray
$\textbf{G}$ is called a geodesic in $M$.\\
A Finsler metric $F=F(x,y)$ on an open subset $\mathcal{U} \subseteq \mathbb{R}^n$
is said to be projectively flat if all geodesics are straight in  $\mathcal{U} $.
It is well-known that a Finsler metric $F$ on an open subset $\mathcal{U} \subseteq \mathbb{R}^n$
is projectively flat if and only if it satisfies the following system of equations,
\[
F_{x^ky^j}y^k-F_{x^j}=0.
\]
This fact is due to G. Hamel\cite{H}. In this case, $G^i=Py^i$, where $P=P(x,y)$ is given by
\[
P=\frac{F_{x^k}y^k}{2F}.
\]
The scalar function $P$ is called the projective factor of $F$.

\section{  $(\alpha, \beta, \gamma)$-metrics }

\begin{Def}\label{def1}
For a Riemannian metric $\alpha $ and  two 1-form $\beta=b_i(x)y^i$ and $\gamma=\gamma_i(x)y^i$ on an $n$-dimensional manifold $M$,  an $(\alpha, \beta, \gamma)$-metric $F$  can be expressed as the form
\[
F=\alpha\Psi(s,\bar{s}),\quad s:=\frac{\beta}{\alpha},\quad \bar{s}:=\frac{\gamma}{\alpha},
\]
where  $\|\beta\|_\alpha<b_0$, $\|\gamma\|_\alpha<g_0$ and $\Psi(s,\bar{s})$ is a positive $C^\infty$ function on $(-b_0,b_0)\times(-g_0,g_0)$.
\end{Def}
\begin{prop}\label{prop1}
For an $(\alpha, \beta, \gamma)$-metric $F=\alpha\Psi(s,\bar{s})$, where $s=\frac{\beta}{\alpha}$ and $\bar{s}=\frac{\gamma}{\alpha}$, the fundamental
tensor is given by
\beq\nonumber\label{g_ij}
&&g_{ij}=\rho a_{ij}+\rho_0b_ib_j+\bar{\rho}_0\gamma_i\gamma_j+\rho_1(b_i\alpha_j+b_j\alpha_i)+\bar{\rho}_1(\gamma_i\alpha_j+\gamma_j\alpha_i)
+\rho_2\alpha_i\alpha_j+\rho_3(b_i\gamma_j+b_j\gamma_i),\\
\eeq
where
\beq\nonumber
&&\rho:=\Psi(\Psi-s\Psi_s-\bar{s}\Psi_{\bar{s}}),\quad \  \rho_0:=\Psi\Psi_{ss}+\Psi_s\Psi_s,\qquad \rho_1:=\Psi\Psi_s-s\rho_0-\bar{s}\rho_3, \\\nonumber
&&\rho_2:=-s\rho_1-\bar{s}\bar{\rho}_1,\qquad \qquad    \bar{\rho}_0:=\Psi\Psi_{\bar{s}\bar{s}}+\Psi_{\bar{s}}\Psi_{\bar{s}},\qquad \bar{\rho}_1:=\Psi\Psi_{\bar{s}}-\bar{s}\bar{\rho}_0-s\rho_3,\\    \label{rho}
&&\qquad\qquad\qquad\qquad \qquad\qquad  \rho_3:=\Psi\Psi_{s\bar{s}}+\Psi_s\Psi_{\bar{s}}.
\eeq
Moreover,
\be\label{det}
det(g_{ij})=\Psi^{n+1}\big(\Psi-s\Psi_s-\bar{s}\Psi_{\bar{s}}\big)^{n-2} \Gamma\  det(a_{ij}),
\ee
where
\beq\nonumber\label{Gamma}\nonumber
\Gamma:=&&\!\!\!\!\!\!\!\!\!\!\!\!\Psi-s\Psi_s-\bar{s}\Psi_{\bar{s}}+(b^2-s^2)\Psi_{ss}+(g^2-\bar{s}^2)\Psi_{\bar{s}\bar{s}}+2(\theta-s\bar{s})\Psi_{s\bar{s}}\\
&&\!\!\!\!\!\!\!\!\!\!\!\!+\big[ (b^2-s^2)(g^2-\bar{s}^2)-(\theta-s\bar{s})^2   \big]J,
\eeq
and
\[
b^2:=a^{ij}b_ib_j,  \quad g^2:=a^{ij}\gamma_i\gamma_j, \quad \theta:=a^{ij}b_i\gamma_j,
\]
\[
J:=\frac{\Psi_{ss}\Psi_{\bar{s}\bar{s}}-\Psi_{s\bar{s}}\Psi_{s\bar{s}}}{\Psi-s\Psi_s-\bar{s}\Psi_{\bar{s}}}.
\]
\beq\label{g^ij}\nonumber
g^{ij}=\frac{1}{\rho}\bigg\{\!\!\!\!\!\!\!\!\!\!\!&& a^{ij}-\frac{1}{\Gamma}\Big[\Psi_{ss}+(g^2-\bar{s}^2)J \Big]b^ib^j
-\frac{1}{\Gamma}\Big[\Psi_{\bar{s}\bar{s}}+(b^2-s^2)J \Big]\gamma^i\gamma^j\\\nonumber
&&-\frac{1}{\Psi\Gamma}\Big[\rho_1+\pi_2(\theta-s\bar{s})-\pi_1(g^2-\bar{s}^2) \Big](b^i\alpha^j+b^j\alpha^i)\\ \nonumber
&&-\frac{1}{\Psi\Gamma}\Big[\bar{\rho}_1-\pi_2(b^2-s^2)+\pi_1(\theta-s\bar{s}) \Big](\gamma^i\alpha^j+\gamma^j\alpha^i)\\ \nonumber
&&+\frac{1}{\Psi^2\Gamma}\bigg( \Big[ s\Psi+(b^2-s^2)\Psi_s+(\theta-s\bar{s})\Psi_{\bar{s}} \Big]  \Big[\rho_1+\pi_2(\theta-s\bar{s})-\pi_1(g^2-\bar{s}^2) \Big]\\ \nonumber
&&\qquad\quad+\Big[ \bar{s}\Psi+(g^2-\bar{s}^2)\Psi_{\bar{s}}+(\theta-s\bar{s})\Psi_{s} \Big]  \Big[\bar{\rho}_1-\pi_2(b^2-s^2)+\pi_1(\theta-s\bar{s}) \Big]  \bigg)\alpha^i\alpha^j  \bigg\},\\
\eeq
where
\beq\nonumber
&&\pi_1:=\Psi_{\bar{s}}\Psi_{s\bar{s}}-\Psi_s\Psi_{\bar{s}\bar{s}}+s\Psi J, \\ \label{pi}
&&\pi_2:=\Psi_{s}\Psi_{s\bar{s}}-\Psi_{\bar{s}}\Psi_{ss}+\bar{s}\Psi J.
\eeq
Moreover, the Cartan tensor of $F$ is given by
\beq\nonumber \label{C_ijk}
C_{ijk}=\!\!\!\!\!\!\!\!\!\!\!&&\frac{\rho_1}{2}\Big[ h_k\alpha_{ij}+h_i\alpha_{jk}+h_j\alpha_{ik} \Big]+\frac{\bar{\rho}_1}{2} \Big[\bar{h}_k\alpha_{ij}+\bar{h}_i\alpha_{jk}+\bar{h}_j\alpha_{ik} \Big]\\ \nonumber
&&
+\frac{(\rho_0)_{\bar{s}}}{2\alpha} \Big[ h_ih_j\bar{h}_k+h_jh_k\bar{h}_i+h_ih_k\bar{h}_j \Big]+\frac{(\bar{\rho}_0)_{s}}{2\alpha} \Big[ \bar{h}_i\bar{h}_jh_k+\bar{h}_j\bar{h}_kh_i+\bar{h}_i\bar{h}_kh_j \Big]\\
&&+\frac{(\rho_0)_{s}}{2\alpha}h_ih_jh_k+\frac{(\bar{\rho}_0)_{\bar{s}}}{2\alpha}\bar{h}_i\bar{h}_j\bar{h}_k.
\eeq
\end{prop}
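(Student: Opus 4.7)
The statement is entirely a computation of $g_{ij}=\tfrac12 (F^2)_{y^iy^j}$ together with the standard matrix-algebra consequences (determinant, inverse via Sherman--Morrison--Woodbury, and $C_{ijk}=\tfrac12 (g_{ij})_{y^k}$). So my plan is: write everything in terms of the three covectors $b_i,\gamma_i,\alpha_i:=a_{ij}y^j/\alpha$, establish the formula for $g_{ij}$ by direct differentiation, and then treat $g_{ij}-\rho a_{ij}$ as a rank-$\le 3$ symmetric update of $\rho\, a_{ij}$ to get the determinant and inverse from a single $3\times 3$ computation.

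\textbf{Step 1: the fundamental tensor.} The key chain-rule identities are $\alpha_{y^i}=\alpha_i$, $s_{y^i}=(b_i-s\alpha_i)/\alpha=h_i/\alpha$ and $\bar s_{y^i}=(\gamma_i-\bar s\alpha_i)/\alpha=\bar h_i/\alpha$. I differentiate $F^2=\alpha^2\Psi^2$ once to get $\tfrac12 (F^2)_{y^i}=\alpha(\Psi-s\Psi_s-\bar s\Psi_{\bar s})\Psi\alpha_i+\alpha\Psi\Psi_s b_i+\alpha\Psi\Psi_{\bar s}\gamma_i$, and then differentiate once more. Regrouping the result along the six symmetric tensor products $a_{ij}$, $b_ib_j$, $\gamma_i\gamma_j$, $\alpha_i\alpha_j$, $b_{(i}\alpha_{j)}$, $\gamma_{(i}\alpha_{j)}$, $b_{(i}\gamma_{j)}$ forces exactly the coefficients $\rho,\rho_0,\bar\rho_0,\rho_1,\bar\rho_1,\rho_2,\rho_3$ defined in \eqref{rho}; the relation $\rho_2=-s\rho_1-\bar s\bar\rho_1$ appears automatically because $y^i(F^2)_{y^iy^j}=2F F_{y^j}$ gives a linear dependence among the $\alpha_j$ coefficients.

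\textbf{Step 2: determinant and inverse.} Write $g_{ij}=\rho a_{ij}+U_i{}^A \tilde C_{AB} U_j{}^B$ where the $n\times 3$ block $U$ has columns $(b_i,\gamma_i,\alpha_i)$ and
\[
\tilde C=\begin{pmatrix}\rho_0 & \rho_3 & \rho_1\\ \rho_3 & \bar\rho_0 & \bar\rho_1\\ \rho_1 & \bar\rho_1 & \rho_2\end{pmatrix},\qquad
U^T a^{-1} U=G=\begin{pmatrix}b^2 & \theta & s\\ \theta & g^2 & \bar s\\ s & \bar s & 1\end{pmatrix}.
\]
Sylvester's determinant identity gives $\det(g_{ij})=\rho^n\det(a_{ij})\det(I_3+\rho^{-1}\tilde C G)$; I expand the $3\times 3$ determinant, substitute the definitions of the $\rho$'s, and simplify. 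The coefficient of the $\Psi_{ss}$-, $\Psi_{\bar s\bar s}$- and $\Psi_{s\bar s}$-linear terms collapses into $b^2-s^2$, $g^2-\bar s^2$ and $\theta-s\bar s$ respectively once one cancels the $s,\bar s$ coming from the last row/column of $G$, and the top $2\times 2$ minor provides the $J$-term. Pulling out $(\Psi-s\Psi_s-\bar s\Psi_{\bar s})^{n-2}$ (from $\rho^{n-3}$ together with the factor produced by the last column/row reduction) yields exactly \eqref{det}. For $g^{ij}$ I apply Sherman--Morrison--Woodbury,
\[
g^{ij}=\rho^{-1}a^{ij}-\rho^{-2}(a^{-1}U)^i{}^A\bigl(\tilde C^{-1}+\rho^{-1}G\bigr)^{-1}_{AB}(U^Ta^{-1})^{Bj},
\]
invert the $3\times 3$ block (its determinant is essentially $\Psi^{-2}\Gamma$ by Step~2), and read off the coefficients in front of $b^ib^j$, $\gamma^i\gamma^j$, $b^{(i}\alpha^{j)}$ etc.; the auxiliary quantities $\pi_1,\pi_2$ arise naturally as the cofactors of the $(1,3)$ and $(2,3)$ entries of that $3\times 3$ matrix.

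\textbf{Step 3: Cartan tensor.} I simply differentiate the formula for $g_{ij}$ with respect to $y^k$, noting that $\partial_{y^k}\alpha_i=h_{ik}/\alpha$ is symmetric and already accounted for inside the $a_{ij}$ and $\alpha_i\alpha_j$ terms (so the derivative of $\rho\,a_{ij}$ recombines with that of $\rho_2\alpha_i\alpha_j$ to cancel the $a_{jk}$-type pieces), and that $\partial_{y^k}b_i=\partial_{y^k}\gamma_i=0$. What is left is controlled by $\partial s/\partial y^k=h_k/\alpha$ and $\partial \bar s/\partial y^k=\bar h_k/\alpha$, so the chain rule on each coefficient $\rho_\ast(s,\bar s)$ produces either $(\rho_\ast)_s h_k/\alpha$ or $(\rho_\ast)_{\bar s}\bar h_k/\alpha$. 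Symmetrising over $(i,j,k)$ and using the identities $b_i=h_i+s\alpha_i$, $\gamma_i=\bar h_i+\bar s\alpha_i$ to rewrite cross terms produces the stated expression; the ``$\rho_1$-type'' and ``$\bar\rho_1$-type'' rows arise from the parts of $(\rho_0)_s, (\bar\rho_0)_{\bar s}, (\rho_3)_s, (\rho_3)_{\bar s}$ etc.\ that one can rewrite back in closed form using the relations \eqref{rho}.

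\textbf{Main obstacle.} Steps 1 and 3 are conceptually straightforward chain-rule bookkeeping. The serious work is Step~2: the $3\times 3$ determinant and, especially, the $3\times 3$ inversion are long algebraic manipulations, and the nontrivial point is to check that the many cross terms reorganise precisely into the compact quantities $\Gamma$, $J$, $\pi_1$, $\pi_2$. I expect to reduce most of that labour by first observing, in the $3\times 3$ block, that subtracting $s\cdot$(row~3) from row~1 and $\bar s\cdot$(row~3) from row~2 clears the linear-in-$s,\bar s$ entries and exposes the $(b^2-s^2)$, $(g^2-\bar s^2)$, $(\theta-s\bar s)$ pattern at a single stroke.
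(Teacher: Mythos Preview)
Your proposal is correct; Steps~1 and~3 match the paper's approach essentially verbatim (the paper also computes $\ell_i=F_{y^i}$, $\ell_{ij}=F_{y^iy^j}$ via the same chain-rule identities for $s_{y^i}$, $\bar s_{y^i}$, $(h_i)_{y^j}$, $(\bar h_i)_{y^j}$, and then differentiates $g_{ij}$ for the Cartan tensor).

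The genuine difference is in Step~2. You package $g_{ij}-\rho a_{ij}$ as a single rank-$3$ symmetric update and apply Sylvester/Woodbury once, reducing everything to one $3\times 3$ determinant and one $3\times 3$ inverse; your row-reduction observation (subtract $s$ and $\bar s$ times the last row) is exactly what makes $b^2-s^2$, $g^2-\bar s^2$, $\theta-s\bar s$ appear cleanly. The paper instead \emph{diagonalises the update by hand}: it completes the square to rewrite
\[
g_{ij}=\rho\Big\{a_{ij}+\delta_1 b_ib_j+\delta_2\gamma_i\gamma_j+\delta_0(b_i+\gamma_i)(b_j+\gamma_j)+\tfrac{\rho_2}{\rho}\,v_iv_j\Big\},\qquad v_i=\alpha_i+\tfrac{\rho_1}{\rho_2}b_i+\tfrac{\bar\rho_1}{\rho_2}\gamma_i,
\]
i.e.\ as a sum of four rank-one pieces, and then applies the standard rank-one determinant/inverse lemma (Lemma~1.1.1 in Chern--Shen) four times in succession. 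Your route is more systematic and keeps the symmetry between $\beta$ and $\gamma$ manifest throughout, at the cost of a single dense $3\times 3$ inversion; the paper's route avoids any matrix inversion larger than $1\times 1$ but requires spotting the right completion of squares and tracking the composition of four successive rank-one corrections. Either way the algebra that produces $\Gamma$, $J$, $\pi_1$, $\pi_2$ is equivalent, so your anticipated ``main obstacle'' is real but not deeper than in the paper's argument.
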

\begin{rem}
One could easily show that the above preposition satisfies for any $(\alpha,\beta)$-metric  just by putting $\bar{s}=0$, and satisfies for any $(\alpha,\gamma)$-metric  just by putting $s=0$.
\end{rem}
\begin{proof}
Recall that the fundamental tensor and Cartan tensor of a Finsler metric $F$ are  given by $g_{ij}=\frac{1}{2}[F^2]_{y^iy^j}=FF_{y^iy^j}+F_{y^i}F_{y^j}$ and $C_{ijk}=\frac{1}{2}(g_{ij})_{y^k}$, respectively.
Direct computations yield
\beqn
&&s_{y^i}=\frac{1}{\alpha}h_i,\ \ where\ \ h_i:=b_i-s\alpha_i,\ \  \alpha_i=\alpha_{y^i},  \\
&&\bar{s}_{y^i}=\frac{1}{\alpha}\bar{h}_i,\ \ where  \ \ \bar{h}_i:=\gamma_i-\bar{s}\alpha_i,\\
&&\Psi_{y^i}=\frac{1}{\alpha}\big[ \Psi_sh_i+\Psi_{\bar{s}}\bar{h}_i \big],\\
&&(\Psi_s)_{y^i}=\frac{1}{\alpha}\big[ \Psi_{ss}h_i+\Psi_{s\bar{s}}\bar{h}_i \big],\\
&&(\Psi_{\bar{s}})_{y^i}=\frac{1}{\alpha}\big[ \Psi_{\bar{s}s}h_i+\Psi_{\bar{s}\bar{s}}\bar{h}_i \big],\\
&&(h_i)_{y^j}=-\frac{1}{\alpha}h_j\alpha_i-s\alpha_{ij}, \ \ where \ \ \alpha_{ij}=\alpha_{y^iy^j}=\frac{1}{\alpha}(a_{ij}-\alpha_i\alpha_{j}), \\
&&(\bar{h}_i)_{y^j}=-\frac{1}{\alpha}\bar{h}_j\alpha_i-\bar{s}\alpha_{ij}.  \\
\eeqn
Let $\ell_i=F_{y^i}$ and $\ell_{ij}=F_{y^iy^j}$. By above equations we have
\be\label{F_i}
\ell_i=\Psi\alpha_i+\Psi_sh_i+\Psi_{\bar{s}}\bar{h}_i,
\ee
\be\label{F_ij}
\ell_{ij}=\big[ \Psi-s\Psi_s-\bar{s}\Psi_{\bar{s}} \big]\alpha_{ij}+\frac{1}{\alpha}\Psi_{ss}h_ih_j+
\frac{1}{\alpha}\Psi_{\bar{s}\bar{s}}\bar{h}_i\bar{h}_j+\frac{1}{\alpha}\Psi_{s\bar{s}}\big[ h_i\bar{h}_j+h_j\bar{h}_i \big],
\ee
 Then we get \eqref{g_ij}.
We can rewrite \eqref{g_ij} as follow
\[
\bar{g}_{ij}=\rho \bigg\{ a_{ij}+\delta_1b_ib_j+\delta_2\gamma_i\gamma_j+\delta_0(b_i+\gamma_i)(b_j+\gamma_j)
+\frac{\rho_2}{\rho}[\alpha_i+\frac{\rho_1}{\rho_2}b_i+\frac{\bar{\rho}_1}{\rho_2}\gamma_i]
[\alpha_j+\frac{\rho_1}{\rho_2}b_j+\frac{\bar{\rho}_1}{\rho_2}\gamma_j] \bigg\}
,
\]
where
\[
\delta_0:=\frac{1}{\rho}(\rho_3-\frac{\rho_1\bar{\rho}_1}{\rho_2}), \qquad \delta_1:=\frac{1}{\rho}(\rho_0-\frac{\rho_1^2}{\rho_2})-\delta_0, \qquad \delta_2:=\frac{1}{\rho}(\bar{\rho}_0-\frac{\bar{\rho}_1^2}{\rho_2})-\delta_0, \qquad
\]
Using Lemma 1.1.1  in \cite{Shen} four times, we obtain \eqref{det} and \eqref{g^ij}.
\end{proof}
\begin{rem}
Notice  that by Cauchy-Schwartz inequality we have
\[
\theta^2=(a^{ij}b_i\gamma_j)^2\leq (a^{ij}b_ib_j)(a^{ij}\gamma_i\gamma_j)=b^2g^2.
\]
\end{rem}
 We need to prove the following proposition.
 \begin{prop}\label{*}
 Let $M$ be an $n$-dimensional manifold. An $(\alpha, \beta, \gamma)$-metric $F=\alpha\Psi(s,\bar{s})$, $s=\frac{\beta}{\alpha}$, $\bar{s}=\frac{\gamma}{\alpha}$  is a
Finsler metric  for any Riemannian $\alpha$ and 1-forms $\beta=b_iy^i, \gamma=\gamma_iy^i$  where $\|\beta\|_\alpha < b_0$, $\|\gamma\|_\alpha < g_0$,  $\theta-s\bar{s}\geq0$ if and only if the positive  $C^\infty$ function $\Psi=\Psi(s,\bar{s})$ satisfying
\be\label{condition0}
\Pi:=\Psi-s\Psi_s-\bar{s}\Psi_{\bar{s}}>0, \qquad \Gamma>0,
\ee
when $n \geq 3$ or
\[
\Gamma>0,
\]
when $n=2$,
where $\Gamma$ is given by \eqref{Gamma} and $s$, $\bar{s}$, $b$, $g$ are arbitrary numbers with $|s|\leq b< b_0$ and $|\bar{s}|\leq g< g_0$.
 \end{prop}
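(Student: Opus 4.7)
The smoothness of $F$ on $TM\setminus\{0\}$, positivity of $F$, and positive $1$-homogeneity follow at once from the corresponding properties of $\alpha$ and $\Psi$; the Cauchy--Schwarz inequality applied to $\|\beta\|_\alpha<b_0$ and $\|\gamma\|_\alpha<g_0$ keeps $|s|<b_0$, $|\bar s|<g_0$ on $TM\setminus\{0\}$. The entire content of the proposition is therefore the characterization of positive definiteness of $g_{ij}$. Fix $x\in M$ and $y\neq 0$. For an arbitrary $v\in T_xM$, decompose $v=\lambda y+w$ with $a_{ij}w^iy^j=0$, and set $u:=h_iw^i$, $\bar u:=\bar h_iw^i$. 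Using Proposition~\ref{prop1} together with $h_iy^i=\bar h_iy^i=\alpha_iw^i=0$ and $g_{ij}y^iy^j=F^2$, completing the square in $\lambda$ gives the key identity
\[
g_{ij}v^iv^j=\bigl(\lambda\alpha\Psi+\Psi_s u+\Psi_{\bar s}\bar u\bigr)^2+\Psi\,Q_1(w),
\]
where
\[
Q_1(w):=\Pi\,|w|_\alpha^2+\Psi_{ss}u^2+2\Psi_{s\bar s}u\bar u+\Psi_{\bar s\bar s}\bar u^2
\]
is a quadratic form on the $(n-1)$-dimensional $a$-orthogonal complement $W$ of $y$. Since $\Psi>0$, $g_{ij}$ is positive definite if and only if $Q_1$ is positive definite on $W$.

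Writing $Q_1=\Pi\,a|_W+XBX^T$ with $X=(h\mid\bar h)$ and $B=\bigl(\begin{smallmatrix}\Psi_{ss}&\Psi_{s\bar s}\\\Psi_{s\bar s}&\Psi_{\bar s\bar s}\end{smallmatrix}\bigr)$, and using $|h|_\alpha^2=b^2-s^2$, $|\bar h|_\alpha^2=g^2-\bar s^2$, $h\cdot\bar h=\theta-s\bar s$, two applications of the matrix determinant lemma (Lemma 1.1.1 of \cite{Shen}) yield $\det Q_1=\Pi^{n-2}\Gamma$, in agreement with \eqref{det}. For the $\Leftarrow$ direction when $n\geq 3$: at the trivial point $(s,\bar s,b,g,\theta)=(0,0,0,0,0)$ of the parameter domain one has $h=\bar h=0$ and $Q_1=\Psi(0,0)\,I$, which is positive definite. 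The parameter domain cut out by $|s|\leq b<b_0$, $|\bar s|\leq g<g_0$, $\theta^2\leq b^2g^2$, $\theta-s\bar s\geq 0$ is star-shaped with respect to this origin via the rescaling $t\mapsto(ts,t\bar s,tb,tg,t^2\theta)$, where the factor $t^2$ on $\theta$ is what preserves the constraint $\theta-s\bar s\geq 0$. Along this path $\det Q_1=\Pi^{n-2}\Gamma>0$ by hypothesis, so the continuous eigenvalues of $Q_1$ cannot cross zero and $Q_1$ remains positive definite throughout. For $n=2$, $W$ is $1$-dimensional and $\det Q_1=\Gamma$, so $\Gamma>0$ alone is necessary and sufficient.

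For the $\Rightarrow$ direction, positive definiteness of $Q_1$ immediately gives $\det Q_1=\Pi^{n-2}\Gamma>0$ at every allowed $(s,\bar s,b,g,\theta)$. For $n\geq 3$, to extract $\Pi(s_0,\bar s_0)>0$ at a prescribed point I would realize a configuration with $\dim\mathrm{span}\{h,\bar h\}<n-1$---take $\beta=0$ when $s_0=0$, $\gamma=0$ when $\bar s_0=0$, and $\gamma=(\bar s_0/s_0)\beta$ otherwise---so that $Q_1$ on the nontrivial orthogonal complement of $\mathrm{span}\{h,\bar h\}$ in $W$ equals $\Pi\cdot I$, forcing $\Pi>0$; then $\Gamma>0$ follows from $\det Q_1>0$. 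The principal obstacle is the $\Leftarrow$ direction, where positivity of $\det Q_1$ does not by itself guarantee positive definiteness; the continuity-in-parameters argument starting at the manifestly positive-definite origin is the cleanest bridge, and the delicate step is the verification that the rescaled path stays inside the parameter domain, notably that the quadratic scaling of $\theta$ respects the constraint $\theta-s\bar s\geq 0$.
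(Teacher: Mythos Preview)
Your argument is correct and takes a genuinely different route from the paper's. Both proofs ultimately rest on the determinant identity $\det(g_{ij})=\Psi^{n+1}\Pi^{n-2}\Gamma\det(a_{ij})$ (equivalently your $\det Q_1=\Pi^{n-2}\Gamma$), but the homotopy used in the sufficiency direction is different. The paper deforms the \emph{function}: it sets $\Psi_t:=(1-t)+t\Psi$, so that $F_0=\alpha$ and $F_1=F$, and then has to prove that $\Gamma_t>0$ for every $t\in[0,1]$. This is not immediate, and the paper does it by writing $\Pi_t\Gamma_t$ as a quadratic in $\Pi_t$, splitting into cases according to the sign of its discriminant $\Delta_t=t^2\Delta$, and running a somewhat delicate continuity argument in $t$ to exclude interior zeros of $\Gamma_t$. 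By contrast, you deform the \emph{geometric parameters} $(s,\bar s,b,g,\theta)\mapsto(ts,t\bar s,tb,tg,t^2\theta)$; since the hypotheses $\Pi>0$ and $\Gamma>0$ are assumed to hold at every admissible parameter value, they hold along the entire path without further work, and positive definiteness of $Q_1$ propagates from the origin by continuity of eigenvalues. Your explicit decomposition $g_{ij}v^iv^j=(\ell_iv^i)^2+\Psi\,Q_1(w)$ also makes the reduction to the $(n-1)$-dimensional form $Q_1$ transparent, whereas the paper reads positive definiteness only through the sign of $\det(g^t_{ij})$. For the necessity direction the paper simply invokes continuity from $\Pi(0,0)=\Psi(0,0)>0$ together with $\Pi^{n-2}\Gamma\neq 0$; your configurational trick of taking $\gamma$ proportional to $\beta$ (so that $h,\bar h$ are collinear and $Q_1$ restricts to $\Pi\cdot\mathrm{id}$ on a nontrivial subspace of $W$) is more explicit but reaches the same conclusion. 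In short, your parameter-space homotopy is cleaner and avoids the discriminant case analysis, while the paper's function-space homotopy has the minor conceptual advantage of working at a single fixed $(\alpha,\beta,\gamma,y)$.
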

\begin{proof}
The case $n = 2$ is similar to $n\geq3$, so we only prove the proposition for $n\geq3$.
It is easy to verify $F$ is a function with regularity and positive homogeneity. In the following we will verify strong covexity.\\
Assume that (\ref{condition0}) is satisfied,
 then we could write $\Pi\Gamma$ as a  second order equation in $\Pi$ as follows
\be\label{PG}
\Pi\Gamma=\Pi^2+(a+\bar{a})\Pi+(a\bar{a}-b\bar{b})>0,
\ee
where
\beqn
&&a:=(b^2-s^2)\Psi_{ss}+(\theta-s\bar{s})\Psi_{s\bar{s}},\qquad b:=(b^2-s^2)\Psi_{s\bar{s}}+(\theta-s\bar{s})\Psi_{\bar{s}\bar{s}},\\
&&\bar{a}:=(g^2-\bar{s}^2)\Psi_{\bar{s}\bar{s}}+(\theta-s\bar{s})\Psi_{s\bar{s}}, \qquad \bar{b}:=(g^2-\bar{s}^2)\Psi_{s\bar{s}}+(\theta-s\bar{s})\Psi_{ss}.
\eeqn
The above inequality holds iff
\begin{enumerate}
\item[(i)] $\Delta<0$ where $\Delta=(a+\bar{a})^2-4(a\bar{a}-b\bar{b})$;
\item[] or
\item[(ii)] $\Delta=0$, then $\Pi\neq\omega$ and $\Pi\Gamma=(\Pi-\omega)^2$ where $\omega=-\frac{1}{2} (a+\bar{a})$;
\item[] or
\item[(iii)] $\Delta>0$, then $0<\Pi<\omega_1$ or $\Pi>\omega_2$;\\
where
\[\omega_1:=-\frac{1}{2}\big[ (a+\bar{a})+\sqrt{\Delta} \big], \qquad \omega_2:=-\frac{1}{2}\big[ (a+\bar{a})-\sqrt{\Delta} \big].\]
\end{enumerate}
Note that $\omega_1<\omega_2$.
Consider a family of functions as follows
\[ \Psi_t(s,\bar{s})=1-t+t\Psi(s,\bar{s}), \qquad 0\leq t\leq1.\]
Put $F_t=\alpha\Psi_t(s,\bar{s})$ and $g^t_{ij}=\frac{1}{2}[F_t^2]_{y^iy^j}$, then $F_0=\alpha$ and $F_1=F$. We are going to prove $\Pi_t>0$ and $\Gamma_t>0$ for any $0\leq t\leq1$, $|s|\leq b< b_0$ and $|\bar{s}|\leq g< g_0$. It is easy to
see that
\[ \Pi_t=1-t+t\Pi>0.\]
Moreover
\[
\Pi_t\Gamma_t=\Pi_t^2+t(a+\bar{a})\Pi_t+t^2(a\bar{a}-b\bar{b})
\]
Then we have $\Delta_t=t^2\Delta$ where
\be\label{Delta}
\Delta=(a+\bar{a})^2-4(a\bar{a}-b\bar{b}).
\ee
It is easy to see that
for $\Delta_t(s,\bar{s})<0$, the equation  $\Pi_t\Gamma_t$ is always positive, i.e.  $\Gamma_t>0$.\\
Now suppose that
 there are $t_0$ and $(s_0,\bar{s}_0)$ such that $\Delta_{t_0}(s_0,\bar{s}_0)>0$. Since $\Delta_{t}(s,\bar{s})$ is continuous with respect to $t$ and $(s, \bar{s})$, then there is $D\subset (-b_0,b_0)\times(-g_0,g_0)$ such that
    \[
    \forall (s, \bar{s})\in D \qquad \Delta_{t}(s,\bar{s})>0,
    \]\[
    \forall (s, \bar{s})\in \partial D \qquad \Delta_{t}(s,\bar{s})=0,
    \]
    where $\partial D$ is border of $D$. Then on $D$ we have
    \be\label{pos}
    \Pi_t\Gamma_t=(\Pi_t-t\omega_1)(\Pi_t-t\omega_2).
    \ee
    If on $D$ we have $\Gamma_t(s, \bar{s})>0$, then there is not anything to prove. Now suppose that there exits $\mathcal{U}\subset D$ such that for $(s, \bar{s})\in \overline{\mathcal{U}}=\mathcal{U}\bigcup\partial\mathcal{U}$ we have $\Gamma_t(s, \bar{s})\leq 0$. Since $\Gamma_0, \Gamma_1$ are both positive, then by continuity $\Gamma_t$ we get
    \[
    \exists t_1,t_2\in (0, 1)\quad  s.t.\quad \Gamma_{t_1}(s, \bar{s})=\Gamma_{t_2}(s, \bar{s})=0; \quad \forall(s, \bar{s})\in \overline{\mathcal{U}}.
    \]
    By \eqref{pos} we have
    \be\label{t1t2}
    (\Pi_{t_1}-t_1\omega_1)(\Pi_{t_1}-t_1\omega_2)=0, \quad and \quad (\Pi_{t_2}-t_2\omega_1)(\Pi_{t_2}-t_2\omega_2)=0.
    \ee
    Then for $t_1\leq t\leq t_2$ we get
    \[
    \forall(s, \bar{s})\in\overline{\mathcal{U}}\quad \Gamma_t(s, \bar{s})\leq0, \quad and \quad \forall (s, \bar{s})\in D-\overline{\mathcal{U}} \quad \Gamma_t(s, \bar{s})>0.
    \]
    By continuity $\Gamma_t$ we have
    \[
    \Gamma_t(s, \bar{s})=0, \quad t_1\leq t\leq t_2, \quad (s, \bar{s})\in\partial\mathcal{U}.
    \]
    Then  \eqref{pos} yields $\Pi_t=t\omega_1$ or $\Pi_t=t\omega_2$. In this case by \eqref{t1t2} we get $t_1=t_2$ which is a contradiction. So $\Gamma_t(s, \bar{s})>0$ on $D$.\\
Now let there is $D_1\subset (-b_0,b_0)\times(-g_0,g_0)$ such that $\Delta(s,\bar{s})=0$ for every $(s,\bar{s})\in D_1$. Then we see that for every $0\leqslant t\leqslant1$ and $(s,\bar{s}) \in D_1$ we have $\Delta_t(s,\bar{s})=0$. One could easily get
\[
\Pi_t\Gamma_t-t^2\Pi\Gamma=(1-t)\big(1-t+2t(\Pi+\frac{a+\bar{a}}{2})\big).
\]
If for some $0<t<1$ we have $1-t+2t(\Pi+\frac{a+\bar{a}}{2})\geqslant 0$ then $\Pi_t\Gamma_t\geqslant t^2\Pi\Gamma>0$ and therefore $\Gamma_t>0$. Now we assume that there are some $0<t<1$ such that
\be \label{e1e}
1-t+2t(\Pi+\frac{a+\bar{a}}{2})<0.
\ee
which one could easily get
\[
1-t+t(\Pi-\frac{a+\bar{a}}{2})<\frac{1}{2}(1-t)\neq 0.
\]
Thus
\be\label{www}
\Pi_{t}\Gamma_{t}=(\Pi_{t}-\omega_{t})^2=\big(1-t+t(\Pi-\omega)\big)^2=\big(1-t+t(\Pi-\frac{a+\bar{a}}{2})\big)^2>0.
\ee
 Then for this $0<t<1$ we get $\Gamma_t>0$, too.\\

All above arguments yield $\Gamma_t>0$ for any $0\leq t \leq 1$. Then  $det(g^t_{ij}) > 0$ for all $0\leq t \leq 1$. Since $(g^0_{ij} )$ is positive definite, we conclude that $(g^t_{ij})$ is positive definite for any $t \in [0, 1]$. Therefore, $F_t$ is a Finsler metric for any $t \in[0, 1]$.\\
Conversely, assume that $F=\alpha\Psi(s,\bar{s})$ is a Finsler metric for any Riemannian metric $\alpha$ and 1-forms $\beta$ and $\gamma$ with $b < b_0$ and $g<g_0$. Then $\Psi=\Psi(s,\bar{s})$ and $det(g_{ij})$ are positive. By Proposition \ref{prop1},  $det(g_{ij})>0$ is equivalent to
\[ \Pi^{n-2}\Gamma>0 \]
which implies $\Pi\neq0$ when $n \geq 3$. Noting that $\Psi(0,0)>0$, the inequality $\Pi>0$ implies. $\Gamma>0$ also holds because  $det(g_{ij})>0$.
\end{proof}
\begin{ex}
In \cite{RS} had introduced a new class of Finsler metrics that called $(F,\gamma)$-metrics. A Finsler metric $\bar{F}$ is called $(F,\gamma)$-metric if it is in the following form
\[
\bar{F}=F\psi(\tilde{s}),\quad \tilde{s}=\frac{\gamma}{F},
\]
where $F$ is a Finsler metric and $\gamma= \gamma_iy^i$ is a 1-form on an $n$-dimensional manifold $M$, $\psi(\tilde{s})$ is a positive $C^\infty$ function on $(-g_0,g_0)$ and $\|\gamma\|_F<g_0$. It had shown that $\bar{F}$ is a Finsler metric if and only if the positive  $C^\infty$ function $\psi(\tilde{s})$ satisfying
\be\label{cond}
\psi-\tilde{s}\psi'>0, \qquad \psi-\tilde{s}\psi'+(p^2-\tilde{s}^2)\psi''>0,
\ee
when $n \geq 3$ or
\[
\psi-\tilde{s}\psi'+(p^2-\tilde{s}^2)\psi''>0,
\]
when n=2, where $p^2:=g^{ij}\gamma_i\gamma_j$. Now suppose that $F$ is an $(\alpha, \beta)$-metric, i.e. $F=\alpha\phi(s)$, $s=\frac{\beta}{\alpha}$. Then
\be\label{exam}
\bar{F}=\alpha\phi(s)\psi(\tilde{s}).
\ee
Let $\bar{s}=\frac{\gamma}{\alpha}$ and $\Psi:=\phi(s)\psi(\frac{\bar{s}}{\phi(s)})$. Then \eqref{exam} is an $(\alpha, \beta, \gamma)$-metric.
A direct computation gives
\[
\Pi=(\phi-s\phi')(\psi-\tilde{s}\psi'),
\]
\[
\Gamma=\big[\phi-s\phi'+(b^2-s^2)\phi''\big]\big[\psi-\tilde{s}\psi'+(p^2-\tilde{s}^2)\psi''  \big].
\]
By these relations we can conclude that if $F$ be an $(\alpha, \beta)$-metric, then $\bar{F}$ is Finsler metric iff $\Pi>0$ and $\Gamma>0$.
\end{ex}

For 1-form $\beta = b_i(x)y^i$  and  $\gamma=\gamma_i(x)y^i$, we have
\be\label{def r,s}
^\beta\!r_{ij}:=\frac{1}{2}\big(b_{i|j}+b_{j| i}  \big), \qquad ^\beta\! s_{ij}:=\frac{1}{2}\big(b_{i| j}-b_{j|i}  \big).
\ee
\be\label{def r,s2}
^\gamma\! r_{ij}:=\frac{1}{2}\big(\gamma_{i|j}+\gamma_{j| i}  \big), \qquad ^\gamma\! s_{ij}:=\frac{1}{2}\big(\gamma_{i| j}-\gamma_{j|i}  \big).
\ee
where $"|"$   denotes the covariant derivative with respect to the Levi-Civita connection of  $\alpha$.
 Moreover, we define
\[
^\beta\! r_{i0}:=^\beta\!\! r_{ij}y^j, \quad  ^\beta\! r_{j}:= b^i\ ^\beta \!r_{ij}, \quad ^\beta\! r_0:=^\beta\!\! r_jy^j, \quad ^\beta\! r_{00}=^\beta\! \! r_{ij}y^iy^j,
\]
\[
^\beta\! s_{i0}:=^\beta\!\! s_{ij}y^j, \quad  ^\beta s_{j}:=b^i\   ^\beta\! s_{ij}, \quad ^\beta s_0:=^\beta\! s_jy^j, \quad ^\beta\! s^i_0=a^{ij} \ ^\beta\! s_{j0},
\]
\[
^\beta\!\bar{s}_0:=^\beta\!s^i_0\gamma_i.
\]
And
\[
^\gamma r_{i0}:=^\gamma\!\! r_{ij}y^j, \quad  ^\gamma\! r_{j}:= b^i\ ^\gamma \!r_{ij}, \quad ^\gamma r_0:=^\gamma\!\! r_jy^j, \quad ^\gamma r_{00}=^\gamma\! \! r_{ij}y^iy^j,
\]
\[
^\gamma\! s_{i0}:=^\gamma\!\! s_{ij}y^j, \quad  ^\gamma\! s_{j}:=b^i\   ^\gamma\! s_{ij}, \quad ^\gamma s_0:=^\gamma\!\! s_jy^j, \quad ^\gamma s^i_0=a^{ij} \ ^\gamma\! s_{j0},
\]
\[
^\gamma\!\bar{s}_0:=^\gamma\!s^i_0b_i.
\]

\section{Spray coefficients of $F$}
Computing $G^i$ by \eqref{0G^i} is too long. Then we use a different technique  that Matsumoto had used in \cite{M1}.\\
For $F=\alpha\Psi(s,\bar{s})$ we can get
\beq\nonumber\label{sx}
&&\beta_{x^j}=b_{0|j}+b_rG^r_j, \qquad  \qquad \gamma_{x^j}=\gamma_{0|j}+\gamma_rG^r_j, \\
&&s_{x^j}=\frac{1}{\alpha}(b_{0|j}+h_rG^r_j),  \qquad
\bar{s}_{x^j}=\frac{1}{\alpha}(\gamma_{0|j}+\bar{h}_rG^r_j),
\eeq
where $G^i_j=^\alpha\!\!G^i_{y^j}$. Moreover, by $\alpha_{|i}=0$ and $\alpha_{i|j}=0$ we have
\beq\nonumber\label{alpha_x}
&&\alpha_{x^j}=\alpha_rG^r_j, \\
&&(\alpha_i)_{x^j}=\alpha_{ir}G^r_j+\alpha_rG^r_{ij},
\eeq
where $G^r_{ij}=^\alpha\!\!G^r_{y^iy^j}$. Then
\beq\label{hx}\nonumber
&&(h_i)_{x^j}=b_{i|j}-\frac{1}{\alpha}b_{0|j}\alpha_i-\frac{1}{\alpha}h_rG^r_j\alpha_i+h_rG^r_{ij}-s\alpha_{ir}G^r_j,\\
&&(\bar{h}_i)_{x^j}=\gamma_{i|j}-\frac{1}{\alpha}\gamma_{0|j}\alpha_i-\frac{1}{\alpha}\bar{h}_rG^r_j\alpha_i+\bar{h}_rG^r_{ij}-\bar{s}\alpha_{ir}G^r_j.
\eeq
Differentiating  (\ref{F_i}) with respect to $x^j$ and using  (\ref{sx}), (\ref{alpha_x}) and \eqref{hx} yield
\beq\nonumber\label{lx2}
\frac{\partial \ell_i}{\partial x^j}=\!\!\!\!\!\!\!\!\!\!\!&&\Psi_s b_{i|j}+\Psi_{\bar{s}} \gamma_{i|j}+\frac{1}{\alpha}\Big[\Psi_{ss}b_{0|j}+\Psi_{s\bar{s}} \gamma_{0|j} \Big]h_i+\frac{1}{\alpha}\Big[\Psi_{s\bar{s}}b_{0|j}+\Psi_{\bar{s}\bar{s}} \gamma_{0|j} \Big]\bar{h}_i\\ \nonumber
&&+\Big[\Psi\alpha_r+\Psi_{s}h_r+\Psi_{\bar{s}} \bar{h}_r \Big]G^r_{ij}+(\Psi-s\Psi_{s}-\bar{s}\Psi_{\bar{s}})\alpha_{ir} G^r_{j}\\
&&+\frac{1}{\alpha}\Big[ \Psi_{ss}h_ih_r+\Psi_{\bar{s}\bar{s}}\bar{h}_i\bar{h}_j+\Psi_{s\bar{s}}(h_i\bar{h}_j+\bar{h}_ih_j)  \Big]G^r_j.
\eeq
Let $";"$  denotes the horizontal   covariant derivative with respect to Cartan connection of $F$.
Next, we deal with $\ell_{i;j}=0$, that is $\frac{\partial\ell_i}{\partial x^j}=\ell_{ir}N^r_j+\ell_r\Gamma^r_{ij}$. Let us define
\be\label{D}
D^i_{jk}:=\Gamma^i_{jk}-G^i_{jk}, \qquad D^i_{j}:=D^i_{jk}y^k=N^i_{j}-G^i_{j}, \qquad D^i:=D^i_{j}y^j=2G^i-2\ ^\alpha\!G^i.
\ee
Then
\[
\frac{\partial\ell_i}{\partial x^j}=\ell_{ir}(D^r_j+G^r_j)+\ell_r(D^r_{ij}+G^r_{ij}).
\]
Putting  (\ref{F_i}) and (\ref{F_ij}) in above equation yields
\beq\nonumber\label{lx3}
\frac{\partial\ell_i}{\partial x^j}=\!\!\!\!\!\!\!\!\!\!\!&&\ell_{ir}D^r_j+\ell_rD^r_{ij}+\Big[\Psi\alpha_r+\Psi_{s}h_r+\Psi_{\bar{s}} \bar{h}_r \Big]G^r_{ij}\\\label{lx3}
&&+\Big[ (\Psi-s\Psi_s-\bar{s}\Psi_{\bar{s}})\alpha_{ir}+\frac{1}{\alpha}\Psi_{ss}h_ih_r+\frac{1}{\alpha}\Psi_{\bar{s}\bar{s}}\bar{h}_i\bar{h}_r
+\frac{1}{\alpha}\Psi_{s\bar{s}}(h_i\bar{h}_r+\bar{h}_rh_i)\Big]G^r_j.
\eeq
By comparing (\ref{lx2}) and (\ref{lx3}) we get the following
\be\label{b_i|j}
\Psi_sb_{i|j}+\Psi_{\bar{s}}\gamma_{i|j}=\ell_{ir}D^r_j+\ell_{r}D^r_{ij}-\frac{1}{\alpha}\big[ \Psi_{ss}b_{0|j}+\Psi_{s\bar{s}}\gamma_{0|j} \big]h_i-\frac{1}{\alpha}\big[ \Psi_{s\bar{s}}b_{0|j}+\Psi_{\bar{s}\bar{s}}\gamma_{0|j} \big]\bar{h}_i.
\ee
Thus by (\ref{def r,s}) and \eqref{def r,s2} we have
\beq\nonumber\label{r_ij}
2\Psi_s\ ^\beta\!r_{ij}+2\Psi_{\bar{s}}\ ^\gamma\!r_{ij}\!\!\!\!\!\!\!\!\!\!\!&&=\ell_{ir}D^r_j+\ell_{jr}D^r_i+2\ell_{r}D^r_{ij}\\ \nonumber
&&-\frac{1}{\alpha}\big[ \Psi_{ss}b_{0|j}+\Psi_{s\bar{s}}\gamma_{0|j} \big]h_i-\frac{1}{\alpha}\big[ \Psi_{ss}b_{0|i}+\Psi_{s\bar{s}}\gamma_{0|i} \big]h_j\\
&&-\frac{1}{\alpha}\big[ \Psi_{s\bar{s}}b_{0|j}+\Psi_{\bar{s}\bar{s}}\gamma_{0|j} \big]\bar{h}_i-\frac{1}{\alpha}\big[ \Psi_{s\bar{s}}b_{0|i}+\Psi_{\bar{s}\bar{s}}\gamma_{0|i} \big]\bar{h}_j,  \\ \nonumber
2\Psi_s\ ^\beta\!s_{ij}+2\Psi_{\bar{s}}\ ^\gamma\!s_{ij}\!\!\!\!\!\!\!\!\!\!\!&&=\ell_{ir}D^r_j-\ell_{jr}D^r_i\\
\nonumber
&&-\frac{1}{\alpha}\big[ \Psi_{ss}b_{0|j}+\Psi_{s\bar{s}}\gamma_{0|j} \big]h_i+\frac{1}{\alpha}\big[ \Psi_{ss}b_{0|i}+\Psi_{s\bar{s}}\gamma_{0|i} \big]h_j\\\label{s_ij}
&&-\frac{1}{\alpha}\big[ \Psi_{s\bar{s}}b_{0|j}+\Psi_{\bar{s}\bar{s}}\gamma_{0|j} \big]\bar{h}_i+\frac{1}{\alpha}\big[ \Psi_{s\bar{s}}b_{0|i}+\Psi_{\bar{s}\bar{s}}\gamma_{0|i} \big]\bar{h}_j.
\eeq
Contracting (\ref{r_ij}) and (\ref{s_ij}) with $y^j$ implies that
\beq\nonumber\label{r_i0}
2\Psi_s\ ^\beta\!r_{i0}+2\Psi_{\bar{s}}\ ^\gamma\!r_{i0}\!\!\!\!\!\!\!\!\!\!\!&&=\ell_{ir}D^r+2\ell_{r}D^r_{i}-\frac{1}{\alpha}\big[ \Psi_{ss}\ ^\beta\!r_{00}+\Psi_{s\bar{s}}\ ^\gamma\!r_{00} \big]h_i\\
&&-\frac{1}{\alpha}\big[ \Psi_{s\bar{s}}\ ^\beta\!r_{00}+\Psi_{\bar{s}\bar{s}}\ ^\gamma\!r_{00} \big]\bar{h}_i.
\\\nonumber
2\Psi_s\ ^\beta\!s_{i0}+2\Psi_{\bar{s}}\ ^\gamma\!s_{i0}\!\!\!\!\!\!\!\!\!\!\!&&=\ell_{ir}D^r-\frac{1}{\alpha}\big[ \Psi_{ss}\ ^\beta\!r_{00}+\Psi_{s\bar{s}}\ ^\gamma\!r_{00} \big]h_i\\ \label{s_i0}
&&-\frac{1}{\alpha}\big[ \Psi_{s\bar{s}}\ ^\beta\!r_{00}+\Psi_{\bar{s}\bar{s}}\ ^\gamma\!r_{00} \big]\bar{h}_i.
\eeq
Subtracting (\ref{s_i0}) from (\ref{r_i0}) yields
\be\label{r-s}
\Psi_s(\ ^\beta\!r_{i0}-\ ^\beta\!s_{i0})+\Psi_{\bar{s}}(\ ^\gamma\!r_{i0}-\ ^\gamma\!s_{i0})=\ell_rD^r_i.
\ee
Contracting (\ref{r-s})  with $y^i$ leads to
\be\label{r_00}
\Psi_s\ ^\beta\!r_{00}+\Psi_{\bar{s}}\ ^\gamma\!r_{00}=\ell_rD^r.
\ee
To obtain the spray coefficients of $F$, first we propose the following lemma.
\begin{lem}\label{system}
The system of algebraic  equations
\[
(i)\ \ell_{ir}A^r=B_i,   \qquad (ii)\ \ell_{r}A^r=B,
\]
has unique solution $A^r$ for given $B$ and $B_i$ such that $B_iy^i=0$. The solution is given by
\be\label{solution}
A^i=(\alpha_rA^r)\alpha^i+\frac{\alpha}{\Pi}B^i-\frac{\alpha}{\Pi\Gamma}(\mu_1h^i+\mu_2\bar{h}^i),
\ee
where $B^i=a^{il}B_l$, $h^i=a^{il}h_l$, $\bar{h}^i=a^{il}\bar{h}_l$ and
\beqn
&&\Pi:=\Psi-s\Psi_s-\bar{s}\Psi_{\bar{s}}, \\
&&\mu_1:=\big[ \Psi_{ss}+(g^2-\bar{s}^2)J \big]B_rb^r+\big[ \Psi_{s\bar{s}}-(\theta-s\bar{s})J \big]B_r\gamma^r,\\
&&\mu_2:=\big[ \Psi_{\bar{s}\bar{s}}+(b^2-s^2)J \big]B_r\gamma^r+\big[ \Psi_{s\bar{s}}-(\theta-s\bar{s})J \big]B_rb^r.
\eeqn
\end{lem}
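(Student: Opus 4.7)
The plan is to exploit the explicit structure of $\ell_{ij}$ given in \eqref{F_ij} together with the identities $h_r y^r = \bar{h}_r y^r = 0$ and $\alpha_{ir} y^r = 0$, which imply $\ell_{ir} y^r = 0$. This degeneracy shows that equation (i) alone cannot determine the $\alpha^i$-component of $A^r$, so equation (ii) is precisely the extra scalar constraint needed to pin down that component; the compatibility condition $B_i y^i = 0$ is automatic here (it is forced by (i) upon contraction with $y^i$).

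The key step is the ansatz
\[
A^i = c\,\alpha^i + \frac{\alpha}{\Pi}\,B^i + p\,h^i + q\,\bar{h}^i,
\]
motivated as follows: on the hyperplane $\{V^i:\alpha_i V^i=0\}$ the leading piece $\Pi\,\alpha_{ij}$ of $\ell_{ij}$ acts like $\tfrac{\Pi}{\alpha}a_{ij}$, so $\tfrac{\alpha}{\Pi}B^i$ is the natural first-order inverse image of $B_i$, while the rank-two correction in $\ell_{ij}$ contributes error terms proportional to $h_i$ and $\bar{h}_i$ that must be absorbed by the unknowns $p$ and $q$. Substituting the ansatz into (i) and using $\ell_{ir}\alpha^r = 0$ together with the contractions $h_r h^r = b^2-s^2$, $\bar{h}_r \bar{h}^r = g^2-\bar{s}^2$, $h_r \bar{h}^r = \theta - s\bar{s}$, $h_r B^r = B_r b^r$, $\bar{h}_r B^r = B_r \gamma^r$, the $B_i$-coefficient matches automatically, and the vanishing of the $h_i$- and $\bar{h}_i$-coefficients produces the $2\times 2$ linear system
\[
\begin{pmatrix}\Pi+a & \bar{b}\\ b & \Pi+\bar{a}\end{pmatrix}\begin{pmatrix}p\\ q\end{pmatrix}=-\frac{\alpha}{\Pi}\begin{pmatrix}\Psi_{ss}(B_r b^r)+\Psi_{s\bar{s}}(B_r\gamma^r)\\ \Psi_{s\bar{s}}(B_r b^r)+\Psi_{\bar{s}\bar{s}}(B_r\gamma^r)\end{pmatrix},
\]
with $a,\bar{a},b,\bar{b}$ as defined in the proof of Proposition \ref{*}.

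The crucial identity is that the determinant of this matrix equals $\Pi\Gamma$, which is precisely the factorisation already carried out in \eqref{PG}; since Proposition \ref{*} guarantees $\Pi>0$ and $\Gamma>0$, the system has a unique solution. Applying Cramer's rule and simplifying the numerators via the definition $\Pi J = \Psi_{ss}\Psi_{\bar{s}\bar{s}}-\Psi_{s\bar{s}}^2$ yields $p=-\alpha\mu_1/(\Pi\Gamma)$ and $q=-\alpha\mu_2/(\Pi\Gamma)$ with $\mu_1,\mu_2$ as stated. Contracting the ansatz with $\alpha_i$ and using $\alpha_i B^i=\alpha_i h^i=\alpha_i\bar{h}^i=0$ confirms $c=\alpha_r A^r$, and equation (ii) then determines this scalar uniquely via $\Psi c + \Psi_s(h_r A^r)+\Psi_{\bar{s}}(\bar{h}_r A^r)=B$, invertible because $\Psi>0$.

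The main obstacle is the algebraic packaging: both the determinant reduction to $\Pi\Gamma$ and the cofactor simplification into the compact form of $\mu_1,\mu_2$ hinge on the same cancellation that collapses quadratic expressions in the second derivatives of $\Psi$ into multiples of $\Pi J$ (for instance $(g^2-\bar{s}^2)(\Psi_{ss}\Psi_{\bar{s}\bar{s}}-\Psi_{s\bar{s}}^2)=(g^2-\bar{s}^2)\Pi J$). Conceptually this is pure linear algebra, but care is needed so that the identifications are stated cleanly enough to make the final form transparent.
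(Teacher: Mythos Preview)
Your argument is correct, and the underlying linear algebra coincides with the paper's, but the organisation differs. The paper does not begin with an ansatz for $A^i$; instead it contracts equation~(i) with $b^i$ and with $\gamma^i$ (after first computing $\ell_{ij}b^i$ and $\ell_{ij}\gamma^i$ from \eqref{F_ij}) to obtain a $2\times 2$ system directly for the scalars $h_jA^j$ and $\bar h_jA^j$, solves that system, then feeds the result into (ii) via \eqref{F_i} to get $\alpha_jA^j$, and only at the very end rewrites (i) as $\frac{\Pi}{\alpha}\bigl(a_{ij}A^j-(\alpha_jA^j)\alpha_i\bigr)+\cdots=B_i$ and contracts with $a^{il}$ to read off $A^i$. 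Your route reverses this: you guess the shape of $A^i$ and then determine the unknown coefficients $p,q$. The two $2\times 2$ matrices are transposes of one another, so the determinant is the same $\Pi\Gamma$ in both cases; your version has the merit of explicitly invoking the factorisation \eqref{PG} from Proposition~\ref{*}, a connection the paper's proof leaves implicit. The paper's route, on the other hand, derives the formula without presupposing that $A^i$ lies in the span of $\alpha^i,B^i,h^i,\bar h^i$; for your ansatz method to be a complete proof of uniqueness you should remark that nondegeneracy of $g_{ij}=F\ell_{ij}+\ell_i\ell_j$ forces any solution of (i)--(ii) to coincide with the one you exhibit.
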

\begin{proof}
By contracting (\ref{F_ij}) with $b^i$ and $\gamma^i$ we have
\beq\label{lb}\nonumber
&&\ell_{ij}b^i=\frac{1}{\alpha}\big[\Pi+(b^2-s^2)\Psi_{ss}+(\theta-s\bar{s})\Psi_{s\bar{s}}  \big]h_j
+\frac{1}{\alpha}\big[ (b^2-s^2)\Psi_{s\bar{s}}+(\theta-s\bar{s})\Psi_{\bar{s}\bar{s}} \big]\bar{h}_j,\\
&&\label{lg}\\ \nonumber
&&\ell_{ij}\gamma^i=\frac{1}{\alpha}\big[ (\theta-s\bar{s})\Psi_{ss}+(g^2-\bar{s}^2)\Psi_{s\bar{s}} \big]h_j
+\frac{1}{\alpha}\big[ \Pi+(\theta-s\bar{s})\Psi_{s\bar{s}}+(g^2-\bar{s}^2)\Psi_{\bar{s}\bar{s}}  \big]\bar{h}_j.\\
\eeq
Next contracting equation (i) with $b^i$ and $\gamma^i$ and using \eqref{lb} and \eqref{lg} we get the following
\[
\begin{cases}
&\big[ \Pi+(b^2-s^2)\Psi_{ss}+(\theta-s\bar{s})\Psi_{s\bar{s}}  \big]h_jA^j
+\big[ (b^2-s^2)\Psi_{s\bar{s}}+(\theta-s\bar{s})\Psi_{\bar{s}\bar{s}} \big]\bar{h}_jA^j= \alpha B_jb^j \\ \\
&\big[ (\theta-s\bar{s})\Psi_{ss}+(g^2-\bar{s}^2)\Psi_{s\bar{s}} \big]h_jA^j
+\big[ \Pi+(\theta-s\bar{s})\Psi_{s\bar{s}}+(g^2-\bar{s}^2)\Psi_{\bar{s}\bar{s}}  \big]\bar{h}_jA^j=\alpha B_j\gamma^j.
\end{cases}
\]
By solving the above system we obtain
\beq\nonumber
&&h_jA^j=\frac{\alpha}{\Pi\Gamma}\Big\{  \big[ \Pi+(\theta-s\bar{s})\Psi_{s\bar{s}}+(g^2-\bar{s}^2)\Psi_{\bar{s}\bar{s}}  \big]B_jb^j
-\big[ (b^2-s^2)\Psi_{s\bar{s}}+(\theta-s\bar{s})\Psi_{\bar{s}\bar{s}} \big]B_j\gamma^j         \Big\},\\\label{hA} \\ \nonumber
&&\bar{h}_jA^j=\frac{\alpha}{\Pi\Gamma}\Big\{ \big[ \Pi+(b^2-s^2)\Psi_{ss}+(\theta-s\bar{s})\Psi_{s\bar{s}}  \big]B_j\gamma^j-
\big[ (\theta-s\bar{s})\Psi_{ss}+(g^2-\bar{s}^2)\Psi_{s\bar{s}} \big]B_jb^j \Big\}.\\\label{hA2}
\eeq
 Substituting  (\ref{F_i}) in  equation (ii) yields
\[
\Psi\alpha_jA^j+\Psi_sh_jA^j+\Psi_{\bar{s}}\bar{h}_jA^j=B.
\]
By \eqref{hA} and \eqref{hA2}  we get
\beq\nonumber\label{aA}
\alpha_jA^j=\frac{1}{\Psi}\Big\{ B\!\!\!\!\!\!\!\!\!\!\!&&-\frac{\alpha}{\Pi\Gamma} \Big( \Psi_s\big[ \Pi+(\theta-s\bar{s})\Psi_{s\bar{s}}+(g^2-\bar{s}^2)\Psi_{\bar{s}\bar{s}}  \big]-
\Psi_{\bar{s}}\big[ (\theta-s\bar{s})\Psi_{ss}+(g^2-\bar{s}^2)\Psi_{s\bar{s}} \big]\Big) B_jb^j\\ \nonumber
&&-\frac{\alpha}{\Pi\Gamma}\Big( \Psi_{\bar{s}}\big[ \Pi+(b^2-s^2)\Psi_{ss}+(\theta-s\bar{s})\Psi_{s\bar{s}}  \big]-
\Psi_s\big[ (b^2-s^2)\Psi_{s\bar{s}}+(\theta-s\bar{s})\Psi_{\bar{s}\bar{s}} \big]  \Big)B_j\gamma^j \Big\}.\\
\eeq
Applying    (\ref{F_ij}) in  equation (i)   we obtain
\[
\frac{\Pi}{\alpha}\big[a_{ij}A^j-(\alpha_jA^j)\alpha_i\big]+\frac{1}{\alpha}\big[ (\Psi_{ss}h_i+\Psi_{s\bar{s}}\bar{h}_i)h_jA^j+
(\Psi_{s\bar{s}}h_i+\Psi_{\bar{s}\bar{s}}\bar{h}_i)\bar{h}_jA^j \big]=B_i.
\]
Contracting this equation  with $a^{ij}$ and using \eqref{hA} and \eqref{hA2} one could conclude \eqref{solution}.
\end{proof}
Now, we are able to obtain the spray coefficients of $F$.\\
The equations  (\ref{s_i0}) and (\ref{r_00})  constitute the system of algebraic equations  whose
solution from lemma \ref{system} is given by
\[
D^i=(\alpha_rD^r)\alpha^i+\frac{\alpha}{\Pi}B^i-\frac{\alpha}{\Pi\Gamma}(\mu_1h^i+\mu_2\bar{h}^i),,
\]
where
\beqn
&&B_i=2\Psi_s\ ^\beta\!s_{i0}+2\Psi_{\bar{s}}\ ^\gamma\!s_{i0}+\frac{1}{\alpha}\big[ \Psi_{ss}\ ^\beta\!r_{00}+\Psi_{s\bar{s}}\ ^\gamma\!r_{00} \big]h_i+\frac{1}{\alpha}\big[ \Psi_{s\bar{s}}\ ^\beta\!r_{00}+\Psi_{\bar{s}\bar{s}}\ ^\gamma\!r_{00} \big]\bar{h}_i,\\
&&
B=\Psi_s\ ^\beta\!r_{00}+\Psi_{\bar{s}}\ ^\gamma\!r_{00},\\
&&B_ib^i=2\Psi_s\ ^\beta\!s_{0}+2\Psi_{\bar{s}}\ ^\gamma\!\bar{s}_{0}+\frac{1}{\alpha}\big[ \Psi_{ss}\ ^\beta\!r_{00}+\Psi_{s\bar{s}}\ ^\gamma\!r_{00} \big](b^2-s^2)+\frac{1}{\alpha}\big[ \Psi_{s\bar{s}}\ ^\beta\!r_{00}+\Psi_{\bar{s}\bar{s}}\ ^\gamma\!r_{00} \big](\theta-s\bar{s}),\\
&&B_i\gamma^i=2\Psi_s\ ^\beta\!\bar{s}_{0}+2\Psi_{\bar{s}}\ ^\gamma\!s_{0}+\frac{1}{\alpha}\big[ \Psi_{ss}\ ^\beta\!r_{00}+\Psi_{s\bar{s}}\ ^\gamma\!r_{00} \big](\theta-s\bar{s})+\frac{1}{\alpha}\big[ \Psi_{s\bar{s}}\ ^\beta\!r_{00}+\Psi_{\bar{s}\bar{s}}\ ^\gamma\!r_{00} \big](g^2-\bar{s}^2).
\eeqn
Putting $D^i=2\bar{G}^i-2G^i$ we get the following,
\begin{prop}
The spray coefficients $G^i$ are related to $^\alpha\!G^i$ by
\be\label{G^i}
G^{i}=^{\alpha}\!G^i+\frac{\alpha}{A}\big[ \Psi_s\ ^\beta\!s^i_0+\Psi_{\bar{s}}\ ^\gamma\!s^i_0 \big]+\frac{1}{2\Gamma}\big[\Gamma_1b^i+\Gamma_2\gamma^i+\frac{1}{\Psi}\Gamma_3\alpha^i  \big],
\ee
where
\beq\nonumber\label{gamma123}
&&\Gamma_1:=\big[\Psi_{ss}+(g^2-\bar{s}^2)J \big]\mathcal{R}^\beta+\big[\Psi_{s\bar{s}}-(\theta-s\bar{s})J \big]\ \mathcal{R}^\gamma,\\ \nonumber
&&\Gamma_2:=\big[\Psi_{\bar{s}\bar{s}}+(b^2-s^2)J \big]\mathcal{R}^\gamma+\big[\Psi_{s\bar{s}}-(\theta-s\bar{s})J \big]\mathcal{R}^\beta,\\
&&\Gamma_3:=\big[\rho_1+\pi_2(\theta-s\bar{s})-\pi_1(g^2-\bar{s}^2) \big]\mathcal{R}^\beta+
\big[\bar{\rho}_1-\pi_2(b^2-s^2)+\pi_1(\theta-s\bar{s}) \big]\mathcal{R}^\gamma,
\eeq
and
\beq\nonumber \label{R}
&&\mathcal{R}^\beta:=^\beta\!\!r_{00}-\frac{2\alpha}{\Pi}\big[ \Psi_s\ ^\beta\!s_0+\Psi_{\bar{s}}\ ^\gamma\!\bar{s}_0 \big], \\ \label{R,E}
&&\mathcal{R}^\gamma:=^\gamma\!\!r_{00}-\frac{2\alpha}{\Pi}\big[ \Psi_s \ ^\beta\!\bar{s}_0+\Psi_{\bar{s}}\ ^\gamma\!s_0 \big].
\eeq
\end{prop}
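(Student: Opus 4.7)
The plan is to recognize that the pair of identities \eqref{s_i0} and \eqref{r_00} constitutes an algebraic system of exactly the type solved by Lemma \ref{system}, with the unknown $A^r$ there replaced by $D^r$. Setting
\begin{align*}
B_i &:= 2\Psi_s\,{}^\beta s_{i0}+2\Psi_{\bar s}\,{}^\gamma s_{i0}+\tfrac{1}{\alpha}\bigl[\Psi_{ss}\,{}^\beta r_{00}+\Psi_{s\bar s}\,{}^\gamma r_{00}\bigr]h_i+\tfrac{1}{\alpha}\bigl[\Psi_{s\bar s}\,{}^\beta r_{00}+\Psi_{\bar s\bar s}\,{}^\gamma r_{00}\bigr]\bar h_i, \\
B &:= \Psi_s\,{}^\beta r_{00}+\Psi_{\bar s}\,{}^\gamma r_{00},
\end{align*}
the required compatibility $B_iy^i=0$ is immediate since $h_iy^i=\bar h_iy^i={}^\beta s_{i0}y^i={}^\gamma s_{i0}y^i=0$, and Lemma \ref{system} then produces $D^i$ in the form \eqref{solution}.

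Next I would evaluate the three contractions $B^i=a^{ij}B_j$, $B_rb^r$ and $B_r\gamma^r$; these coincide with the expressions displayed in the text leading up to the statement. Substituting $B_rb^r$ and $B_r\gamma^r$ into the definitions of $\mu_1,\mu_2$ in Lemma \ref{system} and grouping like terms reproduces, in light of \eqref{gamma123}, exactly $\Gamma_1$ and $\Gamma_2$. The crucial repackaging step is to observe that each occurrence of $\Psi_s\,{}^\beta s_0+\Psi_{\bar s}\,{}^\gamma\bar s_0$ (respectively $\Psi_s\,{}^\beta\bar s_0+\Psi_{\bar s}\,{}^\gamma s_0$) can be absorbed with the factor $\Pi$ already present in the denominator so as to convert a lone $^\beta r_{00}$ (respectively $^\gamma r_{00}$) into $\mathcal R^\beta$ (respectively $\mathcal R^\gamma$) as given in \eqref{R,E}.

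For the $\alpha^i$ term in \eqref{G^i} I would split $h^i=b^i-s\alpha^i$ and $\bar h^i=\gamma^i-\bar s\alpha^i$ wherever they appear in $B^i$ and in $\mu_1h^i+\mu_2\bar h^i$, so that all three vectors $b^i,\gamma^i,\alpha^i$ appear explicitly. The resulting $\alpha^i$ coefficient, once combined with the $(\alpha_rD^r)\alpha^i$ piece of \eqref{solution} (with $\alpha_rD^r$ read off from \eqref{aA} applied to our $B_i$ and $B$), is what assembles into $\tfrac{1}{2\Psi\Gamma}\Gamma_3\alpha^i$; this assembly is governed by the identities for $\rho_1,\bar\rho_1,\pi_1,\pi_2$ in \eqref{rho} and \eqref{pi}. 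Finally, since $D^i=2G^i-2\,{}^\alpha G^i$ by \eqref{D}, dividing by $2$ and adding $\,{}^\alpha G^i$ yields \eqref{G^i}.

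The main obstacle is not conceptual but bookkeeping: one must verify that the six separate brackets (two in $B^i$, two inside $\mu_1$, two inside $\mu_2$), together with the contribution from $\alpha_rD^r$, collapse into the three compact coefficients $\Gamma_1,\Gamma_2,\Gamma_3$ expressed in terms of $\mathcal R^\beta,\mathcal R^\gamma$. A safe organizing principle is to exploit the manifest $\beta\leftrightarrow\gamma$, $s\leftrightarrow\bar s$, $b\leftrightarrow g$ symmetry of the setup: having established the $b^i$-coefficient, the $\gamma^i$-coefficient can be read off by this exchange, cutting the explicit computation in half.
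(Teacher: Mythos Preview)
Your proposal is correct and follows exactly the approach of the paper: recognize that \eqref{s_i0} and \eqref{r_00} form the system of Lemma~\ref{system} with the specific $B_i$ and $B$ you wrote, compute $B_rb^r$ and $B_r\gamma^r$, feed these into $\mu_1,\mu_2$, and use $D^i=2G^i-2\,{}^\alpha G^i$ from \eqref{D}. The paper in fact presents less detail than you do---it lists $B_i,B,B_ib^i,B_i\gamma^i$ and then simply asserts the proposition---so your discussion of the repackaging into $\mathcal{R}^\beta,\mathcal{R}^\gamma$ and the $\beta\leftrightarrow\gamma$ symmetry is a welcome elaboration rather than a deviation.
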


\section{Projectively flat  $(\alpha, \beta, \gamma)$-metrics}
\begin{lem}
An  $(\alpha, \beta, \gamma)$-metric $F=\alpha\Psi(s,\bar{s})$, where $s=\frac{\beta}{\alpha}$  and $\bar{s}=\frac{\gamma}{\alpha}$, is projectivly flat on an open subset $\mathcal{U} \subseteq \mathbb{R}^n$ if and only if
\be\label{condition}
^\alpha\!h_{ij}\ ^\alpha\!G^i+\frac{\alpha}{\Pi}\big[ \Psi_s\ ^\beta\!s_{j0}+\Psi_{\bar{s}}\ ^\gamma\!s_{j0}\big]+\frac{1}{2\Gamma}\big[\Gamma_1h_j+\Gamma_2\bar{h}_j  \big]=0,
\ee
where $\Gamma_1$ and $\Gamma_2$ are given by \eqref{gamma123} and
\[
^\alpha\!h_{ij}=a_{ij}-\alpha_i\alpha_j.
\]
\end{lem}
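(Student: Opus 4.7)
The plan is to invoke Hamel's criterion, recalled in the Introduction, which says that $F$ is projectively flat on $\mathcal{U}\subseteq\mathbb{R}^n$ if and only if its spray coefficients take the form $G^i=Py^i$ for some scalar function $P=P(x,y)$. So the lemma reduces to rewriting the pointwise condition $G^i=Py^i$ as the vanishing of the $\alpha$-angular projection of $G^i$, and then substituting the spray formula obtained in the previous proposition.

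First I would verify the equivalence
\[
G^i=Py^i \text{ for some scalar } P \;\Longleftrightarrow\; {}^\alpha\!h_{ij}G^i=0 \text{ for all } j.
\]
The forward direction is immediate from $^\alpha\!h_{ij}y^i=a_{ij}y^i-\alpha\alpha_j=0$. For the converse, $^\alpha\!h_{ij}=a_{ij}-\alpha_i\alpha_j$ is a symmetric bilinear form on $T_xM$ of rank $n-1$ whose kernel is spanned by $y^i$, so $^\alpha\!h_{ij}G^i=0$ forces $G^i$ to be proportional to $y^i$ at every point.

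Second I would contract the spray formula
\[
G^{i}={}^{\alpha}\!G^i+\frac{\alpha}{\Pi}\big[\Psi_s\,{}^\beta\!s^i_0+\Psi_{\bar{s}}\,{}^\gamma\!s^i_0\big]+\frac{1}{2\Gamma}\Big[\Gamma_1 b^i+\Gamma_2\gamma^i+\tfrac{1}{\Psi}\Gamma_3\alpha^i\Big]
\]
with $^\alpha\!h_{ij}$. The term $^\alpha\!h_{ij}\alpha^i$ vanishes because $\alpha^i=y^i/\alpha$ lies in the kernel of $^\alpha\!h_{ij}$, so the entire $\Gamma_3$ contribution drops out. Using $\alpha_i b^i=s$ and $\alpha_i\gamma^i=\bar{s}$, one gets $^\alpha\!h_{ij}b^i=b_j-s\alpha_j=h_j$ and $^\alpha\!h_{ij}\gamma^i=\bar{h}_j$. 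Finally $^\alpha\!h_{ij}\,{}^\beta\!s^i_0={}^\beta\!s_{j0}$, because the antisymmetry $^\beta\!s_{k0}y^k={}^\beta\!s_{kl}y^ky^l=0$ kills the $\alpha_i\alpha_j$ piece of $^\alpha\!h_{ij}$; the analogous identity holds for $^\gamma\!s^i_0$. Collecting these reductions leaves exactly the expression \eqref{condition}.

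The argument has no genuine obstacle beyond careful index bookkeeping. The only step requiring a moment of thought is the kernel computation for the bilinear form $^\alpha\!h_{ij}$, which is what upgrades the obvious implication ``$G^i\propto y^i$ implies $^\alpha\!h_{ij}G^i=0$'' to a true equivalence; without this observation one would have only the forward implication of the lemma.
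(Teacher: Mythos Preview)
Your argument is correct and follows essentially the same strategy as the paper: contract the spray formula with ${}^\alpha h_{ij}$ and use that $\alpha^i$, $b^i$, $\gamma^i$, ${}^\beta\!s^i_0$, ${}^\gamma\!s^i_0$ project to $0$, $h_j$, $\bar h_j$, ${}^\beta\!s_{j0}$, ${}^\gamma\!s_{j0}$ respectively. The forward implication is identical to the paper's.

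For the converse, however, you take a cleaner route. The paper contracts \eqref{condition} with $a^{ij}$ to solve explicitly for $\frac{\alpha}{\Pi}[\Psi_s\,{}^\beta\!s^j_0+\Psi_{\bar s}\,{}^\gamma\!s^j_0]$, substitutes this back into \eqref{G^i}, and exhibits $G^i$ as a scalar multiple of $\alpha^i$, thereby also producing the projective factor $P$. You instead observe once and for all that ${}^\alpha h_{ij}$ has one-dimensional kernel spanned by $y^i$, so ${}^\alpha h_{ij}G^i=0$ already forces $G^i\propto y^i$. Your approach is shorter and conceptually transparent; the paper's has the minor advantage of writing down $P$ explicitly.
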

\begin{proof}
Let  $F=\alpha\Psi(s,\bar{s})$ be a projectively flat metric on $\mathcal{U}$. Therefore, we have
\be\label{P}
G^i=Py^i
\ee
Contracting \eqref{P} with $^\alpha\!h_{ij}$ and using \eqref{G^i}  we get \eqref{condition}.\\
Conversely, suppose that condition \eqref{condition} holds. Contracting \eqref{condition} with $a^{ij}$ yields
\[
\frac{\alpha}{\Pi}\big[ \Psi_s\ ^\beta\!s_{0}^j+\Psi_{\bar{s}}\ ^\gamma\!s_{0}^j\big]=-\frac{1}{2\Gamma}\big[\Gamma_1h^j+\Gamma_2\bar{h}^j  \big]-\big[ ^\alpha\!G^i-^\alpha\!G^r\alpha_r\alpha^i  \big].
\]
Applying it to \eqref{G^i} leads to
\[
G^i=\Big\{\  ^\alpha\!G^r\alpha_r+\frac{1}{2\Gamma}\big[s\Gamma_1+\bar{s}\Gamma_2+\frac{1}{\Psi}\Gamma_3  \big]  \Big\}\alpha^i.
\]
This implies that $F$ is projectively flat.
\end{proof}
\begin{ex}\label{Ex1}
We consider an $(\alpha, \beta, \gamma)$-metric in the following form
\[
F=\alpha e^{\frac{\beta}{\alpha}}+\gamma,\qquad \Psi(s,\bar{s})=e^s+\bar{s}
\]
Let $b_0> 0$ and $g_0>0$ be the largest numbers such that
\be\label{AG}
\Pi=(1-s)e^s>0, \qquad \Gamma=(1-s+b^2-s^2)e^s>0, \qquad |s|<b<b_0, \quad |\bar{s}|<g<g_0,
\ee
so that $F$ is a Finsler metric if and only if $\beta$ and $\gamma$  satisfy  that $b:=\|\beta\|_\alpha<b_0$ and $g:=\|\gamma\|_\alpha<g_0$.
\end{ex}
For this metric we can prove the following lemma.
\begin{lem}
The $(\alpha, \beta, \gamma)$-metric $F =\alpha e^{\frac{\beta}{\alpha}}+\gamma$  is locally projectively flat if and only if $\beta$ is parallel with respect to $\alpha$ and $\gamma$ is closed.
\end{lem}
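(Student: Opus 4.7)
The plan is to specialize the projective--flatness criterion \eqref{condition} of the preceding lemma to $\Psi(s,\bar s)=e^s+\bar s$ and then extract the tensor identities on $\beta,\gamma$ by standard irrationality arguments. First I would record the special values: since $\Psi_{s\bar s}=\Psi_{\bar s\bar s}=0$ one has $J=0$, $\Gamma_2=0$ and $\Gamma_1=e^s\,\mathcal{R}^\beta$, together with $\Pi=(1-s)e^s$ and $\Gamma=(1-s+b^2-s^2)e^s$ as in Example~\ref{Ex1}, and
\[
\mathcal{R}^\beta={}^\beta r_{00}-\frac{2\alpha}{(1-s)e^s}\bigl[e^s\,{}^\beta s_0+{}^\gamma\bar s_0\bigr],\qquad \mathcal{R}^\gamma={}^\gamma r_{00}-\frac{2\alpha}{(1-s)e^s}\bigl[e^s\,{}^\beta\bar s_0+{}^\gamma s_0\bigr].
\]
Substituting into \eqref{condition} and multiplying by $2(1-s)(1-s+b^2-s^2)e^s$, the equation takes the shape $A\,e^s+B=0$, where $A$ and $B$ are built from $\alpha,\beta,\gamma$ and the tensors ${}^\beta r,{}^\beta s,{}^\gamma r,{}^\gamma s$.

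Next, exploiting the transcendence of $e^s$ over the field generated by polynomials in $y^i$ and $\alpha$, I would force $A=0$ and $B=0$ separately. The equation $B=0$ only involves ${}^\gamma s_{j0}$ and ${}^\gamma\bar s_0$; after multiplying by $\alpha^2$ and splitting into parts rational and irrational in $y$ (the standard $\alpha$-irrationality trick), one obtains ${}^\gamma s_{j0}=0$, i.e.\ $d\gamma=0$, and consequently ${}^\gamma\bar s_0=0$. Feeding these back into $A=0$ and repeating the rational/irrational split in $\alpha$ yields ${}^\beta s_{j0}=0$, ${}^\beta s_0=0$ and ${}^\beta r_{00}=0$, whence $b_{i|j}=0$; that is, $\beta$ is parallel with respect to $\alpha$. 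What remains of $A=0$ is ${}^\alpha h_{ij}\,{}^\alpha G^i=0$, which is equivalent to $\alpha$ being projectively flat --- an ambient assumption implicit in the statement.

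For the converse, if $\beta$ is parallel and $\gamma$ is closed then all the quantities ${}^\beta r_{00},{}^\beta s_{j0},{}^\beta s_0,{}^\beta \bar s_0,{}^\gamma s_{j0},{}^\gamma s_0,{}^\gamma\bar s_0$ appearing in \eqref{condition} vanish; hence $\mathcal{R}^\beta=0$ and both $\Gamma_1,\Gamma_2$ reduce to zero, so \eqref{condition} collapses to ${}^\alpha h_{ij}\,{}^\alpha G^i=0$, which holds whenever $\alpha$ is projectively flat on $\mathcal{U}\subseteq\mathbb{R}^n$. The spray formula \eqref{G^i} then simplifies to $G^i=Py^i$, proving that $F$ is projectively flat.

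The main obstacle I foresee is the careful matching of coefficients in the step $A\,e^s+B=0$: although $e^{\pm s}$ is transcendental, the variables $s$ and $\bar s$ also appear as rational functions of $y$, so one must verify that after clearing denominators every summand is a polynomial in $y^i$ times one of the four linearly independent factors $1,\alpha,e^s,\alpha e^s$. Once this linear independence is in hand, the successive separations into individual tensor identities become routine.
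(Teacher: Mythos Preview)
Your decomposition is genuinely different from the paper's and in one respect cleaner. The paper does \emph{not} separate by the transcendence of $e^{\beta/\alpha}$; it first clears denominators, splits only into $\alpha$-rational and $\alpha$-irrational parts (both of which still carry $e^{\beta/\alpha}$ \emph{and} the unknown ${}^\alpha h_{ij}\,{}^\alpha G^i$), then eliminates ${}^\alpha G^i$ between the two to obtain \eqref{P0}. From \eqref{P0} it contracts with $b^j$ and runs a divisibility-by-$\alpha^2$ argument (which itself tacitly uses the transcendence of $e^{\beta/\alpha}$), passing through a case split on whether ${}^\beta r_{00}=k(x)\alpha^2$ with $k\equiv 0$ or not. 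Your $e^s$-first split buys you an immediate payoff: the piece $B=0$ contains only ${}^\gamma s_{j0}$ and ${}^\gamma\bar s_0$, and a single $\alpha$-irrationality step (the odd-$\alpha$ part is $-\alpha\beta\,{}^\gamma s_{j0}$) gives $d\gamma=0$ at once, with no case analysis.

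The step that does not go through as written is your treatment of $A=0$. The coefficient of ${}^\alpha h_{ij}\,{}^\alpha G^i$ in $A$ is $(\alpha-\beta)(\alpha^2-\alpha\beta+b^2\alpha^2-\beta^2)$, whose expansion has both even and odd $\alpha$-parts; hence the rational/irrational split of $A=0$ produces two equations \emph{each} containing ${}^\alpha G^i$, and neither yields ${}^\beta s_{j0}=0$ or ${}^\beta r_{00}=0$ directly. You must eliminate ${}^\alpha G^i$ between them---exactly the manoeuvre the paper performs to reach \eqref{P0}---before the $\beta$-conditions can be read off (after which the argument is the same as the paper's, only with the $\gamma$-terms already gone; equivalently, $A=0$ is the projective-flatness condition for the exponential $(\alpha,\beta)$-metric $\alpha e^{\beta/\alpha}$). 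You are right that what remains is ${}^\alpha h_{ij}\,{}^\alpha G^i=0$, i.e.\ $\alpha$ projectively flat; the paper proves only the forward direction and is silent on this point as well as on the converse you supply.
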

Recall that 1-form $\beta$ is closed $(d\beta=0)$ if and only if $ ^\beta s_{ij}=0$, and $\beta$ is parallel with respect to $\alpha$ if and only if
$b_{i|j}=0$, i.e. $ ^\beta s_{ij}=0$ and $ ^\beta r_{ij}=0$.
\begin{proof}
let $F =\alpha e^{\frac{\beta}{\alpha}}+\gamma$  be locally projectively flat.  Putting \eqref{AG} into \eqref{condition} yields
\beqn
h_{ij}\ ^\alpha\!G^i\!\!\!\!\!\!\!\!&&+\frac{\alpha^2}{(\alpha-\beta)e^{\frac{\beta}{\alpha}}}\big[e^{\frac{\beta}{\alpha}}\ ^\beta\!s_{j0}+^\gamma\!s_{j0}\big]\\
&&+\frac{\alpha^2}{2\big[ \alpha^2-\alpha\beta+b^2\alpha^2-\beta^2 \big]}\Big\{\ ^\beta\!r_{00}-\frac{2\alpha^2}{(\alpha-\beta)e^{\frac{\beta}{\alpha}}}\big[e^{\frac{\beta}{\alpha}}\ ^\beta\!s_{0}+^\gamma\!\bar{s}_{0}\big]  \Big\}h_j=0.
\eeqn
By multiplying this equation by $2\alpha^2(\alpha-\beta)\big[ \alpha^2-\alpha\beta+b^2\alpha^2-\beta^2 \big]e^{\frac{\beta}{\alpha}}$, we get
\beqn
&&2(\alpha-\beta)\big[ \alpha^2-\alpha\beta+b^2\alpha^2-\beta^2 \big]e^{\frac{\beta}{\alpha}}(a_{ij}\alpha^2-y_iy_j)\ ^\alpha\!G^i\\
&&+
2\alpha^4\big[ \alpha^2-\alpha\beta+b^2\alpha^2-\beta^2 \big]\big[e^{\frac{\beta}{\alpha}}\ ^\beta\!s_{j0}+^\gamma\!s_{j0}\big]\\
&&+\alpha^2(\alpha-\beta)e^{\frac{\beta}{\alpha}}\ ^\beta\!r_{00}(\alpha^2b_j-\beta y_j)-2\alpha^4 \big[e^{\frac{\beta}{\alpha}}\ ^\beta\!s_{0}+^\gamma\!\bar{s}_{0}\big](\alpha^2b_j-\beta y_j)=0.
\eeqn
We can  rewrite this equation as a polynomial in $y^i$ and $\alpha$. This gives
\beqn
&&\Big\{ -2\beta\big[ 2\alpha^2+b^2\alpha^2-\beta^2 \big]e^{\frac{\beta}{\alpha}}(a_{ij}\alpha^2-y_iy_j)\ ^\alpha\!G^i+
2\alpha^4\big[ \alpha^2+b^2\alpha^2-\beta^2 \big]\big[e^{\frac{\beta}{\alpha}}\ ^\beta\!s_{j0}+^\gamma\!s_{j0}\big]\\
&&\quad -\alpha^2\beta e^{\frac{\beta}{\alpha}}\ ^\beta\!r_{00}(\alpha^2b_j-\beta y_j)-2\alpha^4 \big[e^{\frac{\beta}{\alpha}}\ ^\beta\!s_{0}+^\gamma\!\bar{s}_{0}\big](\alpha^2b_j-\beta y_j) \Big\}\\
&&+\alpha\Big\{ 2\big[ \alpha^2+b^2\alpha^2\big]e^{\frac{\beta}{\alpha}}(a_{ij}\alpha^2-y_iy_j)\ ^\alpha\!G^i-2\beta\alpha^4\big[e^{\frac{\beta}{\alpha}}\ ^\beta\!s_{j0}+^\gamma\!s_{j0}\big]+\alpha^2e^{\frac{\beta}{\alpha}}\ ^\beta\!r_{00}(\alpha^2b_j-\beta y_j) \Big\}=0.
\eeqn
$\alpha^{even}$ is rational in $y^i$ and $\alpha$ is
irrational. Then  we have two following equations:
\beq\label{rat1}\nonumber
&&-2\beta\big[ 2\alpha^2+b^2\alpha^2-\beta^2 \big]e^{\frac{\beta}{\alpha}}(a_{ij}\alpha^2-y_iy_j)\ ^\alpha\!G^i+2
\alpha^4\big[ \alpha^2+b^2\alpha^2-\beta^2 \big]\big[e^{\frac{\beta}{\alpha}}\ ^\beta\!s_{j0}+^\gamma\!s_{j0}\big]\\
&&-\alpha^2\beta e^{\frac{\beta}{\alpha}}\ ^\beta\!r_{00}(\alpha^2b_j-\beta y_j)-2\alpha^4 \big[e^{\frac{\beta}{\alpha}}\ ^\beta\!s_{0}+^\gamma\!\bar{s}_{0}\big](\alpha^2b_j-\beta y_j)=0,
\eeq
And
\be\label{iirat1}
2\big[ \alpha^2+b^2\alpha^2\big]e^{\frac{\beta}{\alpha}}(a_{ij}\alpha^2-y_iy_j)\ ^\alpha\!G^i-2\beta\alpha^4\big[e^{\frac{\beta}{\alpha}}\ ^\beta\!s_{j0}+^\gamma\!s_{j0}\big]+\alpha^2e^{\frac{\beta}{\alpha}}\ ^\beta\!r_{00}(\alpha^2b_j-\beta y_j)=0.
\ee
Then we have
\beqn
&&(\alpha^2+b^2\alpha^2)\Big\{ 2 \alpha^4\big[ \alpha^2+b^2\alpha^2-\beta^2 \big]\big[e^{\frac{\beta}{\alpha}}\ ^\beta\!s_{j0}+^\gamma\!s_{j0}\big]\\
&&-\alpha^2\beta e^{\frac{\beta}{\alpha}}\ ^\beta\!r_{00}(\alpha^2b_j-\beta y_j)-2\alpha^4 \big[e^{\frac{\beta}{\alpha}}\ ^\beta\!s_{0}+^\gamma\!\bar{s}_{0}\big](\alpha^2b_j-\beta y_j)  \Big\}\\
&&=-\beta\big[ 2\alpha^2+b^2\alpha^2-\beta^2 \big]\Big\{ -2\beta\alpha^4\big[e^{\frac{\beta}{\alpha}}\ ^\beta\!s_{j0}+^\gamma\!s_{j0}\big]+\alpha^2e^{\frac{\beta}{\alpha}}\ ^\beta\!r_{00}(\alpha^2b_j-\beta y_j)  \Big\}.
\eeqn
Therefore
\beq\label{P0}\nonumber
&&2\alpha^2\Big\{ (\alpha^2+b^2\alpha^2-\beta^2)^2-\alpha^2\beta^2 \Big\}\big[e^{\frac{\beta}{\alpha}}\ ^\beta\!s_{j0}+^\gamma\!s_{j0}\big]\\
&&+
\Big\{ \beta(\alpha^2-\beta^2)e^{\frac{\beta}{\alpha}}\ ^\beta\!r_{00}-2\alpha^2(\alpha^2+b^2\alpha^2)\big[e^{\frac{\beta}{\alpha}}\ ^\beta\!s_{0}+^\gamma\!\bar{s}_{0}\big] \Big\}(\alpha^2b_j-\beta y_j) =0.
\eeq
Contracting \eqref{P0} with $b^j$ leads to
\be\label{P1}
2\alpha^2(\alpha^2-\beta^2)(\alpha^2+b^2\alpha^2-\beta^2)(e^{\frac{\beta}{\alpha}}\ ^\beta\!s_0+^\gamma\!\bar{s}_0)+\beta(\alpha^2-\beta^2)(b^2\alpha^2-\beta^2)e^{\frac{\beta}{\alpha}}\ ^\beta\!r_{00}=0.
\ee
Since $\alpha^2\not\equiv 0 \pmod \beta$ Then $\alpha^2-\beta^2\ne 0$. The term of \eqref{P1} which does not contain $\alpha^2$ is $-\beta^3e^{\frac{\beta}{\alpha}}\ ^\beta\!r_{00}$. Notice $-\beta^3e^{\frac{\beta}{\alpha}}$ is not divisible by $\alpha^2$, then $^\beta\!r_{00}=k(x)\alpha^2$ where we can  consider two cases:
\ben
\item [(1)]  $k(x)=0$;
\item [(2)]  $k(x)\neq0$.
\een
{\bf Case 1:} Substituting $^\beta\!r_{00}=0$ into \eqref{P1} implies that
\[
(\alpha^2+b^2\alpha^2-\beta^2)(e^{\frac{\beta}{\alpha}}\ ^\beta\!s_0+^\gamma\!\bar{s}_0)=0.
\]
If $\alpha^2+b^2\alpha^2-\beta^2=0$, then the term which does not contain $\alpha^2$  is $\beta^2$, which implies that $\beta^2=0$ and is a contradiction. Hence
\be\label{P2}
e^{\frac{\beta}{\alpha}}\ ^\beta\!s_0+^\gamma\!\bar{s}_0=0.
\ee
Putting $^\beta\!r_{00}=0$ and \eqref{P2} into \eqref{P0} leads to
\[
\Big[ (\alpha^2+b^2\alpha^2-\beta^2)^2-\alpha^2\beta^2 \Big]\big[e^{\frac{\beta}{\alpha}}\ ^\beta\!s_{j0}+^\gamma\!s_{j0}\big]=0.
\]
If $(\alpha^2+b^2\alpha^2-\beta^2)^2-\alpha^2\beta^2 =0$, then by a similar argument, we get $\beta^4=0$ which is a contradiction. Therefore
\be\label{P3}
e^{\frac{\beta}{\alpha}}\ ^\beta\!s_{i0}+^\gamma\!s_{i0}=0.
\ee
Differentiating \eqref{P3} with respect to  $y^j$ and  $y^k$  imply that
\[
-(\alpha_jh_k+\alpha_kh_j-s\alpha\alpha_{jk})\ ^\beta\!s_{i0}+h_jh_k\ ^\beta\!s_{i0}+\alpha h_j\ ^\beta\!s_{ik}+\alpha h_k\ ^\beta\!s_{ij}=0.
\]
Contracting it with $b^jb^k$ yields
\be\label{P5}
(b^2-s^2)\Big[(-3s+b^2-s^2)\ ^\beta\!s_{i0}-2\alpha\ ^\beta\!s_{i}    \Big]=0.
\ee
Contracting \eqref{P5} with $b^i$ leads to
\[
(-3s+b^2-s^2)\ ^\beta\!s_{0}=0.
\]
If $-3s+b^2-s^2=0$, then $-3\alpha\beta+b^2\alpha^2-\beta^2=0$. Separating it in rational and irrational terms of $y^i$,  we get $\beta=0$. But this leads to a contradiction. Then  $^\beta\!s_{0}=0$, that is $^\beta\!s_{i}=0$. Putting $^\beta\!s_{i}=0$ in \eqref{P5} yields $^\beta\!s_{i0}=0$.
Substituting it into \eqref{P3} implies that $^\gamma\!s_{i0}=0$. From $^\beta\!s_{i0}=0$ and $^\gamma\!s_{i0}=0$, we get
\[
^\beta\!s_{ij}=0, \qquad ^\gamma\!s_{ij}=0.
\]
{\bf Case 2:} Let $^\beta\!r_{00}=k(x)\alpha^2$. Substituting $^\beta\!r_{00}=k(x)\alpha^2$ into \eqref{P1} implies that
\be\label{P6}
(\alpha^2+b^2\alpha^2-\beta^2)(e^{\frac{\beta}{\alpha}}\ ^\beta\!s_0+^\gamma\!\bar{s}_0)+\beta(b^2\alpha^2-\beta^2)e^{\frac{\beta}{\alpha}}k(x)=0.
\ee
 The term of \eqref{P6} which does not contain $\alpha^2$ is $-\beta^2(e^{\frac{\beta}{\alpha}}\ ^\beta\!s_0+^\gamma\!\bar{s}_0)-\beta^3e^{\frac{\beta}{\alpha}}k(x)$. Then we have
\[
(e^{\frac{\beta}{\alpha}}\ ^\beta\!s_0+^\gamma\!\bar{s}_0)=-\beta e^{\frac{\beta}{\alpha}}k(x).
\]
Putting it into \eqref{P6} yields $-\alpha^2\beta e^{\frac{\beta}{\alpha}} k(x)=0$. This implies that $k(x)=0$, then $^\beta\!r_{00}=0$. Therefore similar to case 1, we can conclude that $^\beta\!s_{ij}= ^\gamma\!s_{ij}=0$.

\end{proof}

\section{Douglas Spaces by  $(\alpha, \beta, \gamma)$-metrics}

In \cite{JD}, Douglas introduced the local functions $D^i_{j\;kl}$ on $TM_0$ defined by
\[
D^i_{j\:kl}:=\frac{\partial^3}{\partial y^j\partial y^k\partial y^l}\Big( G^i-\frac{1}{n+1}\frac{\partial G^m}{\partial y^m}y^i \Big).
\]
It is easy to verify that $D:=D^i_{j\:kl}dx^j\otimes\frac{\partial}{\partial x^i}\otimes dx^k\otimes dx^l$ is a well-defined tensor
on $TM_0$. $D$ is called the Douglas tensor. The Finsler space $(M,F)$ is called a Douglas space if and only if $G^iy^j-G^jy^i$ is homogeneous polynomial of degree three in $y^i$ \cite{BM}.\\
By \eqref{G^i} one can gets
\[
G^iy^j-G^jy^i=(^\alpha G^iy^j-^\alpha\!G^jy^i)+B^{ij},
\]
where
\beq\nonumber\label{B^ij}
B^{ij}:=\!\!\!\!\!\!\!\!\!\!\!&&\frac{\alpha}{\Pi}\big[ \Psi_s\ (^\beta\!s^i_0y^j-^\beta\!s^j_0y^i)+\Psi_{\bar{s}}\ (^\gamma\!s^i_0y^j- ^\gamma\!s^j_0y^i) \big]\\\nonumber
&&+\frac{1}{2\Gamma}
\Big\{ \big[\Psi_{ss}+(g^2-\bar{s}^2)J \big]\mathcal{R}^\beta+\big[\Psi_{s\bar{s}}-(\theta-s\bar{s})J \big]\ \mathcal{R}^\gamma \Big\}
(b^iy^j-b^jy^i)\\\nonumber
&&+\frac{1}{2\Gamma}
\Big\{ \big[\Psi_{\bar{s}\bar{s}}+(b^2-s^2)J \big]\mathcal{R}^\gamma+\big[\Psi_{s\bar{s}}-(\theta-s\bar{s})J \big]\mathcal{R}^\beta \Big\}(\gamma^iy^j-\gamma^jy^i).
\\
\eeq

\begin{ex}
Let $F$ be the metric that introduced in Example \ref{Ex1}. We can prove  $(\alpha, \beta, \gamma)$-metric $F =\alpha e^{\frac{\beta}{\alpha}}+\gamma$  is Daglus  if and only if $\beta$ is parallel with respect to $\alpha$ and $\gamma$ is closed.
\end{ex}
\begin{proof}
Substituting \eqref{AG} into \eqref{B^ij} implies that
\beqn
B^{ij}=\!\!\!\!\!\!\!\!&&\frac{\alpha^2}{(\alpha-\beta)e^\frac{\beta}{\alpha}}\Big[ (^\beta\!s^i_0y^j-^\beta\!\!s^j_0y^i)e^{\frac{\beta}{\alpha}}+(^\gamma\!s^i_0y^j-^\gamma\!\!s^j_0y^i) \Big]\\
&&+\frac{\alpha^2}{2\big[\alpha^2-\alpha\beta+b^2\alpha^2-\beta^2 \big]}\Big[\ ^\beta\!r_{00}-\frac{2\alpha^2}{(\alpha-\beta)e^{\frac{\beta}{\alpha}}} (e^{\frac{\beta}{\alpha}}\ ^\beta\!s_0+^\gamma\!\!\bar{s}_0)\Big](b^iy^j-b^jy^i).
\eeqn
Suppose that $F$ is a Douglas space, that is $B^{ij}$ are $hp(3)$.
Multiplying this equation by $2(\alpha-\beta)\big[\alpha^2-\alpha\beta+b^2\alpha^2-\beta^2 \big]e^{\frac{\beta}{\alpha}}$ yields
\beqn
&&2(\alpha-\beta)\big[\alpha^2-\alpha\beta+b^2\alpha^2-\beta^2 \big]e^{\frac{\beta}{\alpha}}B^{ij}=\\
&&2\alpha^2\big[\alpha^2-\alpha\beta+b^2\alpha^2-\beta^2 \big]e^{\frac{\beta}{\alpha}}(^\beta\!s^i_0y^j-^\beta\!\!s^j_0y^i)
+2\alpha^2\big[\alpha^2-\alpha\beta+b^2\alpha^2-\beta^2 \big](^\gamma\!s^i_0y^j-^\gamma\!\!s^j_0y^i)\\
&&
+\Big[ \alpha^2(\alpha-\beta)e^{\frac{\beta}{\alpha}}\ ^\beta\!r_{00}-2\alpha^4e^{\frac{\beta}{\alpha}}\ ^\beta\!s_0-2\alpha^4\ ^\gamma\bar{s}_0
 \Big](b^iy^j-b^jy^i).
\eeqn
Separating it in rational and irrational terms of $y^i$,  we obtain two equations as follows:
\beq\nonumber\label{irrational}
&&2(\alpha^2+b^2\alpha^2)e^{\frac{\beta}{\alpha}}B^{ij}=-2\alpha^2\beta e^{\frac{\beta}{\alpha}}(^\beta\!s^i_0y^j-^\beta\!\!s^j_0y^i)
-2\alpha^2\beta(^\gamma\!s^i_0y^j-^\gamma\!\!s^j_0y^i)+\alpha^2 e^{\frac{\beta}{\alpha}}\ ^\beta\!r_{00}(b^iy^j-b^jy^i).\\
\eeq
and
\beq\nonumber
&&-2\beta(2\alpha^2+b^2\alpha^2-\beta^2)e^{\frac{\beta}{\alpha}}B^{ij}=\\ \nonumber
&&2\alpha^2(\alpha^2+b^2\alpha^2-\beta^2) e^{\frac{\beta}{\alpha}}(^\beta\!s^i_0y^j-^\beta\!\!s^j_0y^i)
+2\alpha^2(\alpha^2+b^2\alpha^2-\beta^2)(^\gamma\!s^i_0y^j-^\gamma\!\!s^j_0y^i)\\ \nonumber
&&+
\Big[ -\alpha^2\beta e^{\frac{\beta}{\alpha}}\ ^\beta\!r_{00}-2\alpha^4e^{\frac{\beta}{\alpha}}\ ^\beta\!s_0-2\alpha^4\ ^\gamma\bar{s}_0 \Big]
(b^iy^j-b^jy^i).\\ \label{rational}
\eeq
Eliminating $B^{ij}$ from \eqref{irrational} and \eqref{rational} yields
\beqn
&&(\alpha^2-b^2\alpha^2)\Big\{ 2\alpha^2(\alpha^2+b^2\alpha^2-\beta^2) e^{\frac{\beta}{\alpha}}(^\beta\!s^i_0y^j-^\beta\!\!s^j_0y^i)
+2\alpha^2(\alpha^2+b^2\alpha^2-\beta^2)(^\gamma\!s^i_0y^j-^\gamma\!\!s^j_0y^i)\\
&&+
\Big[ -\alpha^2\beta e^{\frac{\beta}{\alpha}}\ ^\beta\!r_{00}-2\alpha^4e^{\frac{\beta}{\alpha}}\ ^\beta\!s_0-2\alpha^4\ ^\gamma\bar{s}_0 \Big]
(b^iy^j-b^jy^i) \Big\}=\\
&&-\beta(2\alpha^2-b^2\alpha^2-\beta^2)\Big\{ -2\alpha^2\beta e^{\frac{\beta}{\alpha}}(^\beta\!s^i_0y^j-^\beta\!\!s^j_0y^i)
-2\alpha^2\beta(^\gamma\!s^i_0y^j-^\gamma\!\!s^j_0y^i)+\alpha^2 e^{\frac{\beta}{\alpha}}\ ^\beta\!r_{00}(b^iy^j-b^jy^i) \Big\}.
\eeqn
Simplifying this equation implies that
\beq\nonumber\label{N0}
&&2\Big[ (\alpha^2+b^2\alpha^2-\beta^2)^2-\alpha^2\beta^2 \Big]e^{\frac{\beta}{\alpha}}(^\beta\!s^i_0y^j-^\beta\!\!s^j_0y^i)+
2\Big[ (\alpha^2+b^2\alpha^2-\beta^2)^2-\alpha^2\beta^2 \Big](^\gamma\!s^i_0y^j-^\gamma\!\!s^j_0y^i)\\ \nonumber
&&+\Big[ -(\alpha^2+b^2\alpha^2)\big( \beta e^{\frac{\beta}{\alpha}}\ ^\beta\!r_{00}+2\alpha^2e^{\frac{\beta}{\alpha}}\ ^\beta\!s_0+2\alpha^2\ ^\gamma\bar{s}_0  \big)+\beta e^{\frac{\beta}{\alpha}}\ ^\beta\!r_{00}(2\alpha^2+b^2\alpha^2-\beta^2) \Big](b^iy^j-b^jy^i)=0.\\
\eeq
Contracting it with $b_iy_j$, we get
\be\label{N1}
2\alpha^2(\alpha^2-\beta^2)(\alpha^2+b^2\alpha^2-\beta^2)(e^{\frac{\beta}{\alpha}}\ ^\beta\!s_0+^\gamma\!\bar{s}_0)+\beta(\alpha^2-\beta^2)(b^2\alpha^2-\beta^2)e^{\frac{\beta}{\alpha}}\ ^\beta\!r_{00}=0.
\ee
The term of \eqref{N1} which does not contain $\alpha^2$ is $-\beta^3e^{\frac{\beta}{\alpha}}\ ^\beta\!r_{00}$. Notice that $-\beta^3e^{\frac{\beta}{\alpha}}$ is not divisible by $\alpha^2$, then $^\beta\!r_{00}=k(x)\alpha^2$ and we can  consider two cases:
\ben
\item [(1)]  $k(x)=0$,
\item [(2)]  $k(x)\neq0$.
\een
{\bf Case 1:} Substituting $^\beta\!r_{00}=0$ into \eqref{N1} implies that
\[
2\alpha^2(\alpha^2+b^2\alpha^2-\beta^2)(e^{\frac{\beta}{\alpha}}\ ^\beta\!s_0+^\gamma\!\bar{s}_0)=0
\]
If $\alpha^2+b^2\alpha^2-\beta^2=0$, then the term which does not contain $\alpha^2$  is $\beta^2$. This implies that $\beta^2=0$ which leads to a contradiction. Hence
\be\label{N2}
e^{\frac{\beta}{\alpha}}\ ^\beta\!s_0+^\gamma\!\bar{s}_0=0.
\ee
Putting $^\beta\!r_{00}=0$ and \eqref{N2} into \eqref{N0} leads to
\[
\Big[ (\alpha^2+b^2\alpha^2-\beta^2)^2-\alpha^2\beta^2 \Big]\Big[e^{\frac{\beta}{\alpha}}(^\beta\!s^i_0y^j-^\beta\!\!s^j_0y^i)+
(^\gamma\!s^i_0y^j-^\gamma\!\!s^j_0y^i)\Big]=0.
\]
 By a similar argument, we get  $(\alpha^2+b^2\alpha^2-\beta^2)^2-\alpha^2\beta^2 \neq0$. Therefore
\be\label{N3}
e^{\frac{\beta}{\alpha}}(^\beta\!s^i_0y^j-^\beta\!\!s^j_0y^i)+
(^\gamma\!s^i_0y^j-^\gamma\!\!s^j_0y^i)=0.
\ee
Contracting \eqref{N3} with $y_j$ yields
\be\label{N4}
e^{\frac{\beta}{\alpha}}\ ^\beta\!s^i_0+ ^\gamma\!s^i_0=0 \Longrightarrow e^{\frac{\beta}{\alpha}}\ ^\beta\!s_{i0}+ ^\gamma\!s_{i0}=0.
\ee
Differentiating \eqref{N4} with respect to  $y^j$ and  $y^k$ and multiplying it by $\alpha^2$ imply that
\[
-(\alpha_jh_k+\alpha_kh_j-s\alpha\alpha_{jk})\ ^\beta\!s_{i0}+h_jh_k\ ^\beta\!s_{i0}+\alpha h_j\ ^\beta\!s_{ik}+\alpha h_k\ ^\beta\!s_{ij}=0.
\]
Contracting it with $b^jb^k$ yields
\be\label{N5}
(b^2-s^2)\Big[(-3s+b^2-s^2)\ ^\beta\!s_{i0}-2\alpha\ ^\beta\!s_{i}    \Big]=0.
\ee
Contracting \eqref{N5} with $b^i$ leads to
\[
(-3s+b^2-s^2)\ ^\beta\!s_{0}=0.
\]
If $-3s+b^2-s^2=0$, then $-3\alpha\beta+b^2\alpha^2-\beta^2=0$. Separating it in rational and irrational terms of $y^i$,  we get $\beta=0$. But this leads to a contradiction. Then  $^\beta\!s_{0}=0$, that is $^\beta\!s_{i}=0$. Putting $^\beta\!s_{i}=0$ in \eqref{N5} yields $^\beta\!s_{i0}=0$.
Substituting it into \eqref{N4} implies that $^\gamma\!s_{i0}=0$. From $^\beta\!s_{i0}=0$ and $^\gamma\!s_{i0}=0$, we get
\[
^\beta\!s_{ij}=0, \qquad ^\gamma\!s_{ij}=0.
\]
{\bf Case 2:} Let $^\beta\!r_{00}=k(x)\alpha^2$. Putting $^\beta\!r_{00}=k(x)\alpha^2$ into \eqref{N1} implies that
\be\label{N6}
2(\alpha^2+b^2\alpha^2-\beta^2)(e^{\frac{\beta}{\alpha}}\ ^\beta\!s_0+^\gamma\!\bar{s}_0)+\beta(b^2\alpha^2-\beta^2)e^{\frac{\beta}{\alpha}}k(x)=0.
\ee
 The term of \eqref{N6} which does not contain $\alpha^2$ is $-2\beta^2(e^{\frac{\beta}{\alpha}}\ ^\beta\!s_0+^\gamma\!\bar{s}_0)-\beta^3e^{\frac{\beta}{\alpha}}k(x)$. Then we have
\[
2(e^{\frac{\beta}{\alpha}}\ ^\beta\!s_0+^\gamma\!\bar{s}_0)=-\beta e^{\frac{\beta}{\alpha}}k(x).
\]
Putting it into \eqref{N6} yields $-\alpha^2\beta e^{\frac{\beta}{\alpha}} k(x)=0$. This implies that $k(x)=0$, then $^\beta\!r_{00}=0$. Therefore similar to case 1, we can conclude that $^\beta\!s_{ij}= ^\gamma\!s_{ij}=0$.

\end{proof}

\bigskip
\noindent
 Nasrin Sadeghzadeh and Tahere Rajabi \\
Department of Mathematics, Faculty of Science,\\
University of Qom, Qom, Iran.\\
E-mail: nsadeghzadeh@qom.ac.ir\\
E-mail: t.rajabi.j@gmail.com\\
\end{document}